\documentclass[aap,reqno]{imsart}

\RequirePackage{amsthm,amsmath,amsfonts,amssymb}
\RequirePackage[authoryear]{natbib}
\RequirePackage[colorlinks,citecolor=blue,urlcolor=blue]{hyperref}
\RequirePackage{graphicx}
\usepackage{xcolor}
\usepackage{float}
\usepackage{subcaption}
\usepackage{caption}
\usepackage{dsfont}

\startlocaldefs
\theoremstyle{plain}
\newtheorem{theo}{Theorem}
\newtheorem{lemm}{Lemma}
\newtheorem{coro}{Corollary}
\newtheorem{prop}{Proposition}
\theoremstyle{definition}
\newtheorem{defi}{Definition}
\newtheorem{assu}{Assumption}

\theoremstyle{remark} 
\newtheorem{exam}{Example}
\newtheorem{rema}{Remark}
\renewcommand{\P}{{\mathbb P}}
\renewcommand{\d}{{\mathrm d}}
\newcommand{\E}{{\mathbb E}}
\newcommand{\R}{{\mathbb R}}
\newcommand{\N}{\mathbb N}

\newcommand{\as}{\qquad a.s.}
\newcommand{\I}[1]{\mathds{1}_{\{#1\}}}

\definecolor{R}{RGB}{255, 150, 0}
\definecolor{T}{RGB}{0, 100, 255}
\parindent 0mm	
\endlocaldefs
\linespread{1.5}

\begin{document}

\begin{frontmatter}
\title{On the perimeter estimation of pixelated excursion sets of 2D anisotropic random fields}
\runtitle{Perimeter estimation of excursion sets}

\begin{aug}
\author{\fnms{Ryan} \snm{Cotsakis}$^*$},
\author{\fnms{Elena} \snm{Di Bernardino}$^*$}
\and
\author{\fnms{Thomas} \snm{Opitz}$^\dagger$}
\address{$^*$Laboratoire J.A. Dieudonn\'e, Universit\'e C\^ote d’Azur, Nice, France}
\address{$^\dagger$Biostatistics and Spatial Processes, INRAE, Avignon, France}
\end{aug}

\begin{abstract}
We are interested in creating statistical methods to provide informative summaries of random fields through the geometry of their excursion sets.
To this end, we introduce an estimator for the length of the  perimeter of excursion sets of random fields on $\R^2$ observed over regular square tilings. The proposed estimator acts on the empirically accessible binary digital images of the excursion regions and computes the length of a piecewise linear approximation of the excursion boundary.
The estimator is shown to be consistent as the pixel size decreases, without the need of any normalization constant, and with neither assumption of Gaussianity nor isotropy imposed on the underlying random field. In this general framework, even when the domain grows to cover $\R^2$, the estimation error is shown to be of smaller order than the side length of the domain.
For affine, strongly mixing random fields, this translates to a multivariate Central Limit Theorem for our estimator when multiple levels are considered simultaneously. Finally, we conduct several numerical studies to investigate statistical properties of the proposed estimator in the finite-sample data setting.
\end{abstract}
%

\begin{keyword}
\kwd{digital geometry}
\kwd{excursion sets}
\kwd{geometric inference}
\kwd{threshold exceedances}
\end{keyword}

\end{frontmatter}

\section{Introduction}

Random fields play a central role in the study of several real-world phenomena.
In many applications, the excursion set of a random field (\textit{i.e.}, the subset of the observation domain on which the random field exceeds a certain threshold) is observed---or partially observed---and its geometry can be used to make meaningful inferences about the underlying field.
Such techniques have been used in disciplines such as astrophysics \citep{gott1990,ade-planck2016}, brain imaging \citep{worsley1992}, and environmental sciences \citep{angulo2010, lhotka2015, frolicher2018}.
In certain cases, for example in landscape ecology, land-use analysis, and statistical modeling, understanding the geometry of excursions is of primary importance \citep{mcgarigal1995,nagendra2004, bolin2015}.\smallskip

\textit{Lipschitz-Killing curvatures} (abbreviated LKCs; also known as \textit{intrinsic volumes}) form a rich, well-known class of geometric summaries of stratified manifolds. \textit{Hadwiger's characterization theorem} states that LKCs form a basis for all rigid motion invariant valuations of convex bodies, which makes them central in the study of the geometry of random sets \citep{schneider2008}. From a theoretical point of view, probabilistic and statistical properties of the LKCs of excursion sets have been widely studied in the last decades \citep{adler2007}.
For Gaussian random fields, \textit{the Euler-Poincar\'e characteristic} (a well-studied, topological LKC) is studied in \citet{EL16} and \citet{dibernardino2017}; the excursion volume (another LKC, better known as the \textit{sojourn time} for one-dimensional processes) is studied in \citet{bulinski2012} and \citet{pham}. The reader is also referred to \cite{mueller2017} and \cite{kratz2018} for a joint analysis of LKCs and to \cite{meschenmoser2013} and \cite{shashkin2013} for functional central limit theorems.

LKCs have recently been used to create several statistical procedures including parametric inference \citep{bierme2019, dibernardino2020} and tests of Gaussianity \citep{dibernardino2017}, isotropy \citep{cabana1987, Fou17, berzin2021}, and  symmetry of  marginal distributions the underlying fields \citep{abaach2021}. \cite{Rossi} quantifies perturbation via the LKCs and provides a quantitative non-Gaussian limit theorem of the perturbed excursion area behaviour.
To further emphasize their importance, LKCs of excursions have deep links to extreme value theory; these insights are summarized in \cite{adler2007} and \cite{azais2009}. LKCs can thus provide meaningful and parsimonious summaries of the spatial properties of the studied random fields.\smallskip

In this manuscript, we focus on the two-dimensional setting---specifically, random fields defined on $\R^2$ endowed with the standard Euclidean metric. In this case, there are exactly three LKCs that can be leveraged to describe excursion sets of random fields in $\R^2$: the excursion volume (\textit{i.e.}, the area), half the value of the perimeter of the excursion set, and the Euler-Poincar\'e characteristic (which is equal to the number of connected components minus the number of holes of the excursion set).\smallskip

Analyzed jointly with information on the area and Euler characteristic of an excursion set, the  perimeter   provides valuable information about the fragmentation of the excursion set. Examples can be found in medical imaging where certain diseases can change fragmentation patterns in biological tissues \citep{Yao, Jurdi2021ASE}, or in ecology where suitable habitats of species are often characterized by exceedances of variables describing favorable conditions, and where edge effects near the boundary the excursion sets play an important role \citep{Debinski, Taubert}.  In spatial risk analysis, the perimeter can give information about the length of the interface between a high-risk zone (associated with exceedances of the threshold level) and moderate-to-low risk zones.
    
Most of the results presented in the previous literature are based on the empirically inaccessible knowledge of the continuous random field $X$ on a compact domain $T \subset \R^2$. In practice, spatial data are  often observed only at sampling locations on a discrete grid $\{s_{i,j}:i,j\in\N_0\}\cap T$, and in such cases, the values of the random field at intermediate points between the sampling locations are not empirically accessible.  This regular lattice setting is popular, for example, in the areas
of remote sensing, computer vision, biomedical imaging, surface meteorology.   The datum at the sampling location $s_{i,j}$ could conceivably be a floating point number representing the value of the random field at $s_{i,j}$, however, it may be the case that this level of precision is not available.
One can also consider the  more general case where the accessible information at the sampling location $s_{i,j}$ is a boolean value corresponding to whether the random field evaluated at $s_{i,j}$ falls within a predetermined interval---normally $[u,\infty)$ for fixed $u\in\R$. In this general case, one obtains a pixelated representation of the excursion set of $X$ at the fixed level~$u$.\smallskip

From these sparse-information, binary digital images of excursion sets, we aim in the present work to infer the second Lipschitz-Killing curvature, \emph{i.e.}, the perimeter of the excursion set, for a fixed level $u$. The perimeter is a particularly difficult quantity to estimate, since, in a digital image, the boundary of an object is comprised of vertical and horizontal pixel edges, which obviously does not correspond to the object's true boundary. There exists a number of algorithms for computing the perimeter of objects in hard segmented (\textit{i.e.} binary) digital images, many of which are summarized in \cite{coeurjolly2004} with further developments made in \cite{deVieilleville2007}. It seems, however, intractable to evaluate the performance of these algorithms on excursion sets of two-dimensional random fields.  \cite{bierme2021} studies how the integrated perimeter of excursion sets over a set of levels changes when considering discretized versions of the underlying \textit{stationary}, \textit{isotropic} random fields (\textit{i.e.}, those with translation- and rotation-invariant distributions). This gives rise to a perimeter estimator for a single level, complete with its own probabilistic analysis for isotropic random fields \citep{bierme2021}. The estimator is further analyzed and given explicit covariance formulas in \cite{abaach2021} for the case of complete spatial independence. Although this particular perimeter estimator is quite natural to study, it suffers from certain defects; namely, an intrinsic inadequacy for anisotropic random fields.\smallskip

We introduce a class of estimators for the perimeter of objects in binary digital images, one of which being particularly suitable for estimating the perimeter of excursion sets of anisotropic random fields on $\R^2$.
The elements of the class are uniquely associated to the choice of norm that is used to measure a piecewise linear approximation of the excursion's boundary. The estimator derived from the work of \cite{bierme2021} arises as the element of the proposed class associated to the 1-norm. The novel estimator associated to the 2-norm (the primary focus of this paper) possesses the desirable property of \textit{multigrid convergence} (\textit{i.e.}, strong consistency as the pixel size tends to zero; see Theorem~\ref{thm:consistent_Phat}), which we extend to convergence in mean (see Proposition~\ref{prp:L1}).
These general results hold under weak assumptions about the  smoothness of the random field that do not include Gaussianity, nor isotropy. As the domain grows to cover $\R^2$, sufficient conditions are given such that the error in the estimation is of smaller order than the fluctuations of the perimeter---making the limiting distributions of the perimeter and the estimator identical. In particular, by further supposing that the underlying random field is \textit{affine} and \textit{strongly mixing} (notions described in Section~\ref{sec:asymptotic_normality}), the estimator associated to the 2-norm is asymptotically normal with the same asymptotic variance as perimeter itself (see Theorem~\ref{thm:CLT}).\smallskip

The organization of the paper is as follows. Section~\ref{sec:defs} specifies key notions including: excursion sets, the hypotheses on the underlying random fields, the regular grid on which the excursion sets are observed, and the novel class of considered perimeter estimators.
In Section~\ref{sec:main_results}, the statistical properties of the perimeter estimate based on the 2-norm are discussed for a fixed domain (Section~\ref{sec:results_fixedT}) and for a sequence of growing domains (Section~\ref{sec:results_growingT}).
Section~\ref{sec:sims} provides extensive numerical results to support and illustrate the theory developed in Section~\ref{sec:main_results}.
Proofs and auxiliary notions are postponed to Section~\ref{sec:proofs}. We conclude with a discussion section. Some supplementary elements are provided in the Appendix Section.

\section{Definitions and Notation}\label{sec:defs}

Let us begin by introducing some notation. Calligraphic font is used to denote sets of isolated points in $\R^2$.
For a set $S\subset \R^2$, its boundary is denoted $\partial (S)$; its cardinality $\#(S)$; and its Lebesgue measure $\nu(S)$. We use $\mathcal{H}^1$ to denote the one-dimensional Hausdorff measure, and $C^k$ to denote the space of real-valued functions on $\R^2$ with $k$ continuous derivatives.
Between the nomenclatures \textit{sample paths} and \textit{trajectories}, we choose to use the former when describing the realizations of a random field.

The following assumption ensures that the random objects that we consider are well defined.
\begin{assu}\label{ass:basic}
The real-valued random field $X= \{X(s): s\in\R^2\}$ defined on a probability space $(\Omega, \mathcal{F}, \P)$ has $C^2$ sample paths.
\end{assu}

\begin{defi}\label{def:excursion}
    Denote the \emph{excursion set} of $X$ at the level $u\in\R$ by
        $E_{X}(u) := \{s\in\R^2:X(s) \geq u\}$.
    For compact $T\subset \R^2$, we denote the restriction of $E_X(u)$ and $\partial\big( E_X(u)\big)$ to $T$ by
    \begin{equation*}
    E_X(T,u) := T\cap E_X(u)\qquad \mathrm{and}\qquad E^{\partial}_X(T,u) := T\cap \partial\big( E_X(u)\big)
    \end{equation*}
    respectively. Finally, the quantity of interest in this paper:
    \begin{equation*}
        P_{X}^{T}(u) := \mathcal{H}^1\big(E_X^\partial(T,u)\big).
    \end{equation*}
\end{defi}

In Figure~\ref{fig:excursion_continuous} (a), a $C^2$ sample path of a Gaussian random field $X$ is depicted in a square domain $T$ with the contours $E^\partial_X(T,u)$ drawn on the domain for various levels $u$. In Figure~\ref{fig:excursion_continuous} (b) and (c), $E_X(u)$ is represented by the dark regions, for two different levels $u$.

\begin{figure}
     \centering
     \begin{subfigure}[b]{0.35\linewidth}
         \centering
        \includegraphics[width=\linewidth]{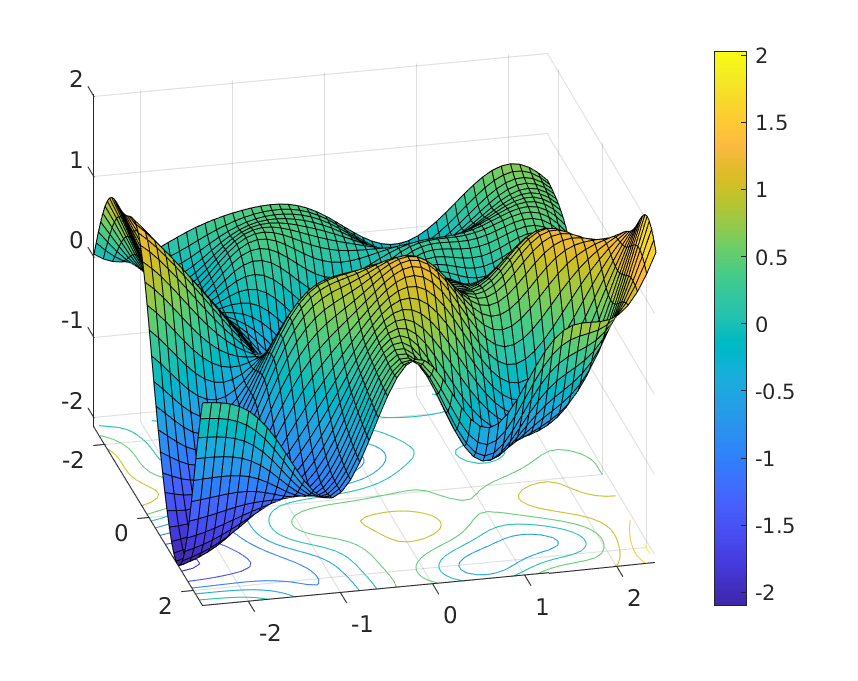}
         \caption{}
     \end{subfigure}
     \begin{subfigure}[b]{0.50\linewidth}
         \centering
         \includegraphics[width=\linewidth]{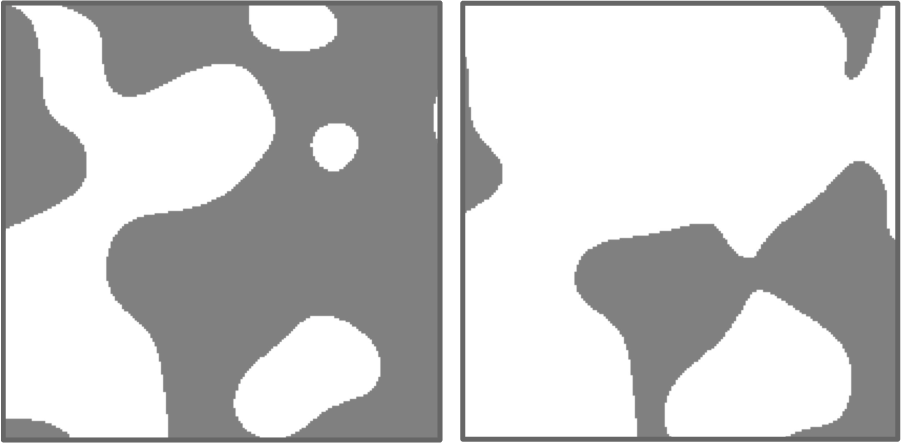}
         \caption{\qquad\qquad\qquad\qquad\ (c)}
     \end{subfigure}
     \vspace{-0.3cm}
        \caption{Panel (a): a $C^2$ realization of a stationary, centered, Gaussian random field $X$ with covariance function $r_X(h) = \exp(-||h||_2^2)$ is depicted in the square-shaped observation window $T=[-2.5,2.5]^2$ (generated using the R package \texttt{RandomFields} \citep{RandomFields}). Underneath the sample path, the curves $E^\partial_X(T,u)$ are drawn for different values of $u$. Panel (b) (\emph{resp.} panel (c)): the dark region $E_X(T,u)$ is shown for $u=0$ (\emph{resp.} $u=0.5$).}
        \label{fig:excursion_continuous}
\end{figure}

\smallskip

In what follows, let
\begin{equation}\label{eqn:def_T}
T := [-t,t]^2\subset \R^2,
\end{equation}
for fixed $t > 0$.
Before proceeding, it is helpful to specify additional assumptions on the considered random fields.

\begin{assu}\label{ass:weak}
Let $X_1$ and $X_2$ denote the partial derivatives of $X$ in the two principle Cartesian directions in $\R^2$, and let $X_{11}$ and $X_{22}$ denote the corresponding second order partials.
For any $u\in\R$, the following three conditions hold almost surely:
\begin{enumerate}
\item $X$ has no critical points in $T$ at the level $u$.
\item The restriction of $X$ to each face of the square boundary $\partial (T)$ has no local extrema at the level $u$.
\item For $k\in\{1,2\}$, there are no $s\in T$ such that $X(s)-u = X_k(s) = X_{kk}(s) = 0$.
\end{enumerate}
\end{assu}

Together, Assumptions \ref{ass:basic} and \ref{ass:weak} ensure that the random field $X$ is almost surely \textit{suitably regular} at the level $u$ in $T$ as defined in \citet[Definition~6.2.1]{adler2007}. The third condition of Assumption~\ref{ass:weak} is made to be slightly stronger than item (C) in Definition~6.2.1 of \cite{adler2007} so that the suitably regular condition holds even after a permutation of the two principal Cartesian directions.
This is useful when considering the set
\begin{equation}\label{eqn:curly_Y}
\mathcal{Y}_{X}^T(u) := \bigcup_{k=1,2}\{s\in E^\partial_X(T,u): X_k(s) = 0\}.
\end{equation}
Indeed, under Assumptions \ref{ass:basic} and \ref{ass:weak}, it follows directly from \citet[Lemma~6.2.3]{adler2007} that
\begin{equation}\label{eqn:gradient_ass_k}
    \#\big(\mathcal{Y}_{X}^T(u)\big)<\infty, \as
\end{equation}

Recall that the \textit{reach} of a set $S\subset \R^d$ is given by
\begin{equation}\label{eqn:reach}
\mathrm{reach}(S) := \sup\{\delta\geq 0: \forall y \in S_\delta\  \exists!x\in S\ \mathrm{nearest\ to}\ y\},
\end{equation}
where $S_\delta = \big\{y\in\R^d: \exists\ x \in S\ \mathrm{s.t.}\ ||x-y||_2\leq \delta \big\}$ is the dilation of the set $S$ by a radius $\delta \geq 0$ (see, \textit{e.g.}, Definition~11 in \cite{thale2008}). Equations~\eqref{eqn:gradient_ass_k} and~\eqref{eqn:reach} will be useful later (see, for example, Remark~\ref{rem:positive_as}).
\smallskip

Recall that a curve $\gamma\subset\R^2$ is connected if it cannot be expressed as the union of two disjoint nonempty closed sets in $\R^2$. For sets $B \subseteq A \subseteq \R^2$, $B$ is maximally connected in $A$ if $B$ is connected and there does not exist a connected $C\subseteq A$ such that $B\subset C$.
\begin{defi}
Let $\Gamma_X^T(u)$ be the set of maximally connected subsets of $E^\partial_X(T,u)$.
\end{defi}

\begin{assu}\label{ass:integrable_quantities}
The random variables $P_X^T(u)$ and $\#\big(\Gamma_X^T(u)\big)$ are in $L^1(\Omega)$, the space of integrable random variables, for all $u\in\R$.
\end{assu}

We emphasize that none of the assumptions stated thus far restrict to stationary or isotropic random fields. Although stationarity is assumed in Theorem~\ref{thm:CLT} and Corollary~\ref{cor:gaussian}, these results and all other results are applicable to anisotropic random fields---a crucial point that we investigate numerically in Section~\ref{sec:aniso}.\smallskip

In what follows, we study a novel estimator of the random quantity $P_{X}^T(u)$ for arbitrary but fixed $u\in\R$, based only on the random field $Z_X(\cdot; u) = \{Z_X(s;u):s\in\R^2\}$ defined by
\begin{equation*}
Z_X(s;u) := \I{s\in E_{X}(u)} = \I{X(s) \geq u},\qquad s\in\R^2.
\end{equation*}
Note that $Z_X(s;u)$ has dependent Bernoulli margins with parameter $\P\big(X(s)\geq u\big)$.
We will assume that $Z_X(\cdot; u)$ is empirically accessible only at sampling locations on a regular grid, one that is defined in Section~\ref{gridSection} below.

\subsection{Sampling locations on a regular grid}\label{gridSection}

\begin{defi}\label{def:grid}
Fix $\epsilon > 0$, and define a square grid of points in $\R^2$ as
\begin{equation}\label{eqn:grid_def}
\mathcal{G}^{(T,\epsilon)} := \big\{s_{i,j}: i,j\in\N_0\big\}\cap T, \,\,\, \mbox{ with } \,\, s_{i,j} := (-t+i\epsilon, -t+j\epsilon)\in\R^2,
\end{equation}

and with $T$ and $t$ as in equation~\eqref{eqn:def_T}.
Let $M$ be the number of rows (which is consequentially identical to the number of columns) of $\mathcal{G}^{(T,\epsilon)}$. Define the index set
$$I^{(T, \epsilon)} := \{0,\ldots,M-1\}\subset \N_0$$
and the random matrix $\zeta_X^{(T,\epsilon)}(u)$ with binary elements
\begin{equation}\label{eqn:zeta_def}
\zeta_{X,i,j}^{(T,\epsilon)}(u)  := Z_X(s_{i,j};u) =  \I{X(s_{i,j}) \geq u},
\end{equation}
for $ i,j\in I^{(T, \epsilon)}$. For $m\in \N^+$, let us define
$$ I^{(T, \epsilon, m)} := \{i\in I^{(T, \epsilon)}: i \equiv 0\ (\mathrm{mod}\ m)\}.$$
\end{defi}

Notice that
$\mathcal{G}^{(T,\epsilon)} = \{ s_{i,j} : i,j\in I^{(T, \epsilon)}\}.$
We provide an illustration of $\mathcal{G}^{(T,\epsilon)}$ in Figure~\ref{fig:sampling}, where the elements with indices in $ I^{(T,\epsilon,m)}$, with $m=2$, are highlighted in red. We highlight that our proposed estimator for $P_{X}^T(u)$ will be based only on the sparse observations $\zeta_{X,i,j}^{(T,\epsilon)}(u)$ for $i,j\in I^{(T,\epsilon)}$ (see Section \ref{sec:estimator}).

\begin{figure}
    \centering
    \includegraphics[width=0.35\linewidth]{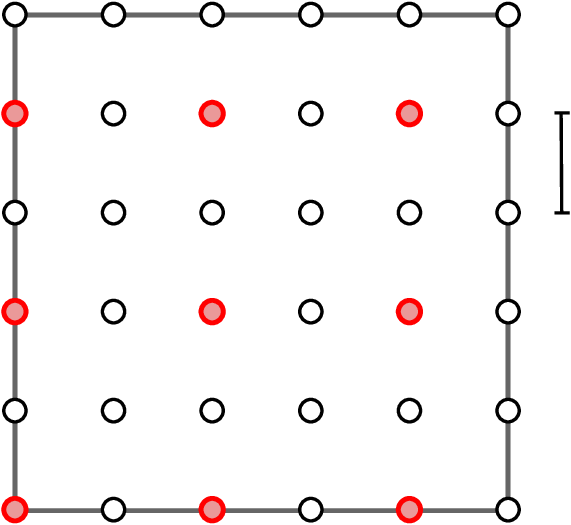}
    \put(0,88){$\epsilon$}
    \put(-125,109){$T$}
    \put(-136,12){$s_{0,0}$}
    \put(-136,36){$s_{0,1}$}
    \put(-136,60){$s_{0,2}$}
    \put(-112,12){$s_{1,0}$}
    \put(-112,36){$s_{1,1}$}
    \put(-87,12){$s_{2,0}$}
    \caption{An illustration of the quantities defined in Definition~\ref{def:grid}. The positions of the elements of $\mathcal{G}^{(T,\epsilon)}$ in $\R^2$ are shown as circles, and the subset $\{s_{i,j} : i, j \in  I^{(T,\epsilon,m)}\}$ with $m=2$ is highlighted in red. Here, $M=6$, and the side length of $T$ is $\sqrt{\nu(T)} = (M-1)\epsilon = 5\epsilon$.}
    \label{fig:sampling}
\end{figure}

\begin{rema}\label{rem:pixels}
The data matrix $\zeta_X^{(T,\epsilon)}(u)$ in~\eqref{eqn:zeta_def} can be represented as a binary digital image as depicted in Figure~\ref{fig:excursion_discrete} (b). In this framework, $M$ corresponds to the pixel density or grid size of the image (an integer number of pixels per distance of $2t$, the side length of $T$), and $\epsilon$ corresponds to the pixel width. The quantities are related by
$|M\epsilon - 2t|\leq \epsilon.$
\end{rema}

\begin{figure}
    \centering
    \includegraphics[width=0.57\linewidth]{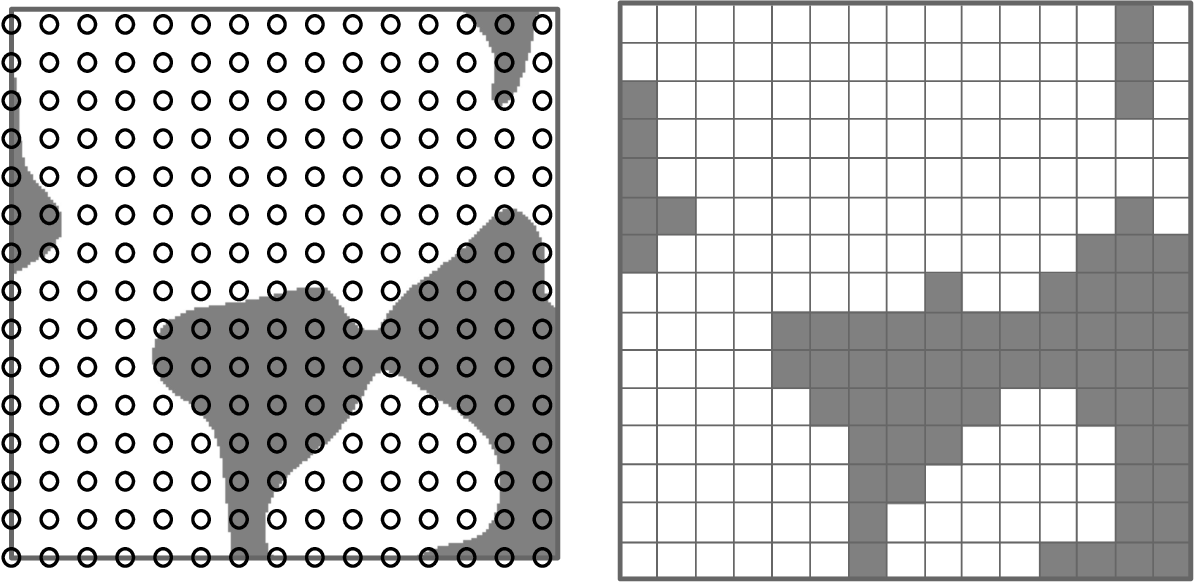}
    \put(-182, -10){(a)}
    \put(-62, -10){(b)}
    \caption{Panel  (a):  $E_X(T,0.5)$, as shown in Figure~\ref{fig:excursion_continuous} panel (c), superposed with the elements of the grid $\mathcal{G}^{(T,\epsilon)}$ shown as black circles. Here, $\epsilon\approx 0.32$. Panel  (b): the binary matrix $\zeta_X^{(T,\epsilon)}(0.5)$, defined in~\eqref{eqn:zeta_def}, represented as a binary digital image (dark pixels corresponding to 1, and white~to~0).}
    \label{fig:excursion_discrete}
\end{figure}

\subsection{Definition of the estimators}\label{sec:estimator}
Here, we introduce a class of estimators of $P_X^T(u)$ that use only the information contained in $\zeta_X^{(T,\epsilon)}(u)$, defined in~\eqref{eqn:zeta_def}.
Loosely speaking, $\zeta_X^{(T,\epsilon)}(u)$ is separated into submatrices, and in each submatrix the length of the line segment that approximately separates the 1's from the 0's is computed.
In this way, the estimator obtained depends on the choice of norm used.
\begin{defi}\label{def:estimator}
With $||\cdot||_{p}$ denoting the $p$-norm, for $p\in\N^+$, define
\begin{equation}\label{eqn:phat}
    \hat{P}_{X}^{(p)}(\epsilon, m; T, u) := \, \epsilon\sum_{a\in I^{(T, \epsilon, m)}}\sum_{b\in I^{(T, \epsilon, m)}} \big|\big|\big(N_{X,h}(a,b;u), N_{X,v}(a,b;u)\big)\big|\big|_p,
\end{equation}
where
\begin{equation*}
    N_{X,h}(a,b;u) := \sum_{i = a}^{(a+m-1) \wedge (M-1)}\ \sum_{j = b}^{(b+m-1) \wedge (M-2)} |\zeta_{X,i,j}^{(T,\epsilon)}(u) - \zeta_{X,i,j+1}^{(T,\epsilon)}(u)|,\ \ \ a,b\in I^{(T,\epsilon,m)},
\end{equation*}
and
\begin{equation*}
    N_{X,v}(a,b;u) := \sum_{i = a}^{(a+m-1) \wedge (M-2)}\ \sum_{j = b}^{(b+m-1) \wedge (M-1)} |\zeta_{X,i,j}^{(T,\epsilon)}(u) - \zeta_{X,i+1,j}^{(T,\epsilon)}(u)|,\ \ \ a,b\in I^{(T,\epsilon,m)}.
\end{equation*}
\end{defi}

Continuing from the framework discussed in Remark \ref{rem:pixels}, $N_{X,v}$ (\emph{resp.} $N_{X,h}$) counts the number of pixels in a subrectangle---of size at most $m\times m$ pixels---of $T$ that differ in shade from the neighbouring pixel to the right (\emph{resp.} above). In other words, $N_{X,v}$ (\emph{resp.} $N_{X,h}$) provides a count of significant vertical (\emph{resp.} horizontal) pixel edges in the subrectangle.

By considering the estimator in~\eqref{eqn:phat} with norm $p=1$, one recovers the estimator that is extensively studied in \cite{bierme2021} and \citet{abaach2021}. It counts the number of pixel edges that separate pixels of different color, and rescales the count by $\epsilon$. Thus, $\hat{P}^{(1)}_X(\epsilon, m; T,u)$ will not depend on $m$, so we write $\hat{P}_{X}^{(1)}(\epsilon; T, u)$ in place of $\hat{P}_{X}^{(1)}(\epsilon, m; T, u)$.\smallskip

Figure~\ref{fig:estimators} illustrates the behavior of the estimator in equation~\eqref{eqn:phat} constructed with two different norms; the norms associated to $p=1$ and $p=2$. In addition, Table~\ref{tab:computations} provides the corresponding terms in equation~\eqref{eqn:phat} for this example, for each $a,b\in I^{(T,\epsilon,2)} = \{0,2,4\}$, for both $p=1$ (second-last column) and $p=2$ (last column).

\begin{figure}
    \centering
    \includegraphics[width=0.76\linewidth]{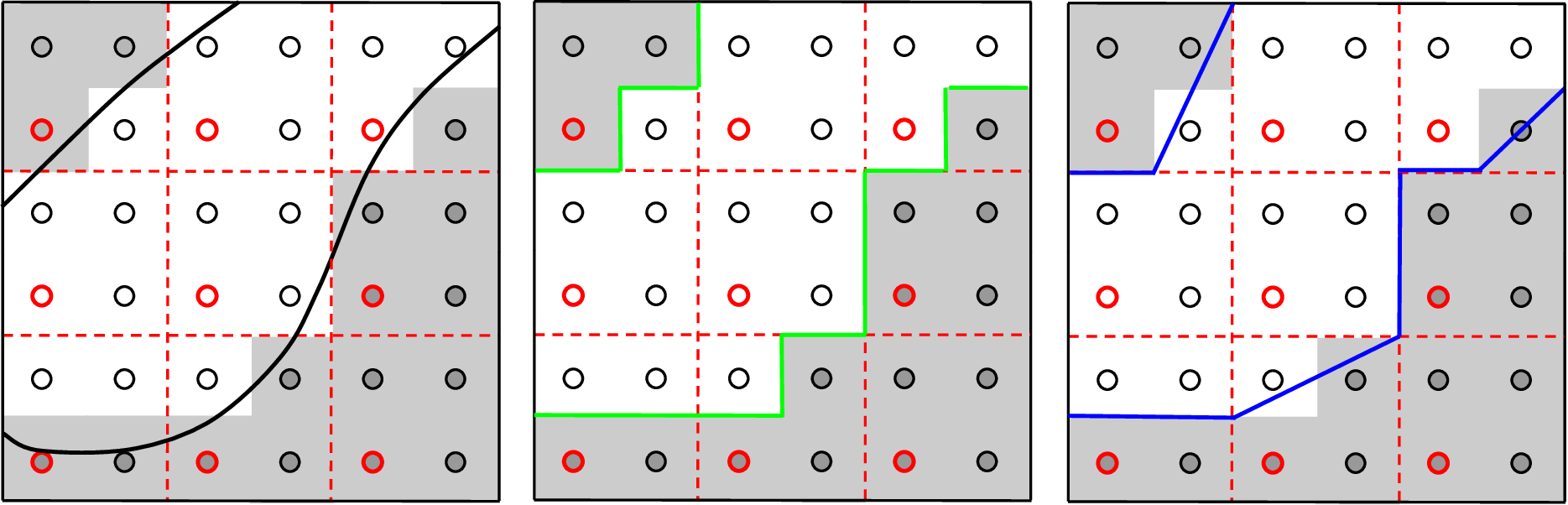}
    \put(-264, -10){(a)}
    \put(-162, -10){(b)}
    \put(-60, -10){(c)}
    \caption{Panel (a): the curve $E^\partial_X(T,u)$ is shown in relation to the points in $\mathcal{G}^{(T,\epsilon)}$ in \eqref{eqn:grid_def}. Points in the dark regions are assigned a value of 1 in the matrix $\zeta_X^{(T,\epsilon)}(u)$, and points in white are assigned a value of 0. The points outlined in red have indices in $ I^{(T,\epsilon,m)}$ with $m=2$. In effect, $\hat{P}_{X}^{(1)}(\epsilon; T, u)$ is calculated by counting the pixel edges shown in green (see panel (b)), whereas  $\hat{P}_{X}^{(2)}(\epsilon, 2; T, u)$ is calculated by summing the lengths of the blue piecewise linear curves  (see panel (c)).}
    \label{fig:estimators}
\end{figure}

\begin{table}
\caption{$\hat{P}_{X}^{(p)}(\epsilon, m; T, u)$ in~\eqref{eqn:phat} computed for the discretized excursion set in Figure~\ref{fig:estimators}. The last two columns correspond to the terms $\big(N_{X,v}(a,b;u)^p + N_{X,h}(a,b;u)^p\big)^{1/p}$ for $p=1$ and $p=2$. Summing each term yields $\hat{P}_{X}^{(p)}(\epsilon, m; T, u)$, as shown in bold in the final row.}
\centering
\begin{tabular}{ |p{1.75cm}|p{1.75cm}||p{1.75cm}|p{1.75cm}||p{1cm}|p{1cm}| }
 \hline
 $a\in I^{(T,\epsilon,m)}$& $b\in I^{(T,\epsilon,m)}$& $N_{X,h}(a,b;u)$& $N_{X,v}(a,b;u)$& $p=1$ & $p=2$\\
 (column) & (row)&&&&\\
 \hline
 0& 0& 2& 0& 2& 2\\
 0& 2& 1& 0& 1& 1\\
 0& 4& 1& 2& 3& $\sqrt{5}$\\
 2& 0& 2& 1& 3& $\sqrt{5}$\\
 2& 2& 0& 2& 2& 2\\
 2& 4& 0& 0& 0& 0\\
 4& 0& 0& 0& 0& 0\\
 4& 2& 1& 0& 1& 1\\
 4& 4& 1& 1& 2& $\sqrt{2}$\\
 \hline
 \multicolumn{4}{|c|}{}&&\\[-1em]
 \multicolumn{2}{|c}{} & \multicolumn{2}{c|}{$\hat{P}_{X}^{(p)}(\epsilon, m; T, u):$} & \textbf{14}$\boldsymbol\epsilon$ & \textbf{11.89}$\boldsymbol\epsilon$\\[3pt]
 \hline
\end{tabular}
\label{tab:computations}
\end{table}

The estimator in~\eqref{eqn:phat} with norm $p=2$ approximates the length of $E^\partial_X(T,u)$ by the total length of a set of line segments that approximate the curve (see Figure~\ref{fig:estimators} (c)). The number of possible orientations of each line segment grows with $m$; so does the length of each line segment, which, loosely speaking, is on the order of $m\epsilon$. Therefore, it is not surprising that $\hat{P}_{X}^{(2)}(\epsilon, m; T, u)$ depends on $m$, and our statistical analysis in Section~\ref{sec:main_results} therefore takes place in the regime where $m$ is large and $m\epsilon$ is small. In Section~\ref{sec:hyperparam}, we provide an adaptive method to select the hyperparameter $m$ when $\epsilon$ is given as a feature of the data.

\section{Main Results}\label{sec:main_results}

The focus of this section is to prove convergence results for the estimator $\hat{P}_{X}^{(2)}(\epsilon, m; T, u)$. The statistical analysis is separated into two regimes. In Section~\ref{sec:results_fixedT}, we consider the domain $T$ to be fixed and decrease the pixel width while sending $m$ to infinity. Section~\ref{sec:results_growingT} studies the behaviour of the estimator on a sequence of growing domains. In particular, in Section \ref{sec:results_growingT1}, we study the asymptotic relationships between $\epsilon$, $m$, and the Lebesgue measure of the sequence of domains, and provide sufficient conditions for good convergence properties. We conclude with a multivariate Central Limit Theorem in the case where multiple levels $(u_1,\ldots,u_k)$ are considered simultaneously under the assumption that the underlying random field $X$ is affine and strongly mixing (see Section~\ref{sec:asymptotic_normality} for the theorem and the notions of affinity and strongly mixing).

\subsection{On a fixed domain with decreasing pixel width}\label{sec:results_fixedT}

Here, we are interested in the behaviour of the estimator $\hat{P}_{X}^{(2)}(\epsilon, m; T, u)$ in the case where the domain $T=[-t,t]^2$ is fixed, and the spacing between the locations of the observations in the matrix $\zeta_X^{(T,\epsilon)}(u)$ tends to 0. We proceed to show that the resulting perimeter estimate converges almost surely to $P_X^T(u)$ and give the rate of convergence.

\begin{theo}\label{thm:consistent_Phat}
Let $(m_n)_{n\geq 1}$ be a non-decreasing sequence in $\N^+$ tending to $\infty$ as $n\rightarrow\infty$.
Let $(\epsilon_n)_{n\geq 1}$ be a sequence in $\R^+$ such that $m_n\epsilon_n^{2/3}$ converges to a constant $C\in\R^+$ and that the vertices of $T$ are contained in $\mathcal{G}^{(T,\epsilon_n)}$ for all $n\in \N^+$. Then, under Assumptions \ref{ass:basic} and \ref{ass:weak}, for fixed $u\in\R$, it holds that
$$g_n\big|\hat{P}^{(2)}_X(\epsilon_n, m_n; T, u) - {P}_{X}^T(u)\big|\stackrel{\mathrm{a.s.}}{\longrightarrow}0,\qquad n\rightarrow\infty,$$
where $(g_n)_{n\geq 1}$ is any non-decreasing sequence such that $g_n = o(m_n)$.
\end{theo}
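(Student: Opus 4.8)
The plan is to control the estimation error locally, by partitioning $T$ into the sub-squares of side length roughly $m_n \epsilon_n$ determined by the index set $I^{(T,\epsilon_n,m_n)}$, and showing that on each such sub-square the $p=2$ length of the piecewise-linear approximation deviates from $\mathcal{H}^1$ of the true boundary piece by a suitably small amount. The key geometric input is that, by Assumptions~\ref{ass:basic} and~\ref{ass:weak}, the curve $E^\partial_X(T,u)$ is a finite union of $C^2$ arcs (suitably regular in the sense of \citet[Definition~6.2.1]{adler2007}), so in particular it is compact with positive reach off a finite set---cf.\ \eqref{eqn:gradient_ass_k} and \eqref{eqn:reach}. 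First I would fix a (random but a.s.\ finite) decomposition of $E^\partial_X(T,u)$ into arcs that are graphs of $C^2$ functions over one of the two coordinate axes, away from the finitely many points of $\mathcal{Y}_X^T(u)$ where the gradient is horizontal or vertical, and away from the finitely many points where the boundary meets $\partial(T)$.

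Second, I would classify the sub-squares into three types. (i) Sub-squares disjoint from $E^\partial_X(T,u)$: these contribute $0$ to both the estimator and the true perimeter, once $\epsilon_n$ is small enough that a monotone-in-each-row-and-column pixel pattern is guaranteed there. (ii) ``Good'' sub-squares meeting the boundary but staying within a single monotone graph-arc: here one shows that $N_{X,h}$ and $N_{X,v}$ count, up to $O(1)$ error, the number of columns resp.\ rows crossed by the arc, so that $\epsilon_n\,\|(N_{X,h},N_{X,v})\|_2$ approximates the chord length of the arc over that sub-square, which in turn approximates $\mathcal{H}^1$ of the arc there with relative error $O((m_n\epsilon_n)^2)$ by the $C^2$ bound on the arc's curvature (a Taylor/Pythagorean estimate: for a $C^2$ graph, arclength minus chord length is $O(\text{slope variation})\cdot(\text{run})$, and both the vertical drop and the run are captured by the edge counts up to $\pm 1$ per sub-square). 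The per-sub-square error is thus $O(m_n\epsilon_n^3) + O(\epsilon_n)$ (the latter from the $\pm O(1)$ miscount times $\epsilon_n$), and summing over the $O((t/(m_n\epsilon_n))^2)$ good sub-squares gives a total of $O(m_n\epsilon_n) + O(\epsilon_n/(m_n\epsilon_n)^2)$. Wait---this needs the counts to be sharp; more carefully, the length of the true arc in each good sub-square is $\Theta(m_n\epsilon_n)$ and there are $O(1/(m_n\epsilon_n))$ of them along the curve, so summing the relative $O((m_n\epsilon_n)^2)$ errors yields a total good-sub-square error of $O(m_n\epsilon_n)$, while the additive $\pm\epsilon_n$ miscounts sum to $O(\epsilon_n/(m_n\epsilon_n)) = O(1/m_n)$. (iii) ``Bad'' sub-squares: those containing a point of $\mathcal{Y}_X^T(u)$, a boundary-of-$T$ intersection, or where two arcs come within $m_n\epsilon_n$ of each other. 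By \eqref{eqn:gradient_ass_k} and positive reach, there are a.s.\ only $O(1)$ of these (a number independent of $n$), and on each the estimator contribution and the true-perimeter contribution are both $O(m_n\epsilon_n)$ by a crude bound (the estimator on an $m\times m$ block is trivially $\le 2m\epsilon \sqrt{2}$ up to constants, and the true length in a ball of radius $m_n\epsilon_n$ around a point of finite reach is $O(m_n\epsilon_n)$). So the bad sub-squares contribute $O(m_n\epsilon_n)$ in total as well.

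Combining, $\big|\hat P^{(2)}_X(\epsilon_n,m_n;T,u) - P_X^T(u)\big| = O(m_n\epsilon_n) + O(1/m_n)$ almost surely. Since $m_n\epsilon_n^{2/3} \to C$, we have $m_n\epsilon_n = (m_n\epsilon_n^{2/3})\cdot \epsilon_n^{1/3} \sim C\epsilon_n^{1/3} \to 0$, and also $m_n \to \infty$, so the bound tends to $0$; multiplying by any $g_n = o(m_n)$, note $g_n \cdot m_n\epsilon_n = g_n \epsilon_n^{1/3}\cdot(m_n\epsilon_n^{2/3}) \lesssim m_n \epsilon_n^{1/3}\cdot C = C^{3/2}\cdot (m_n\epsilon_n^{2/3})^{1/2}\cdot\epsilon_n^{1/6}\cdot\ldots$---more simply, $g_n \cdot m_n\epsilon_n = o(m_n)\cdot m_n\epsilon_n$ need not vanish, so the balance is: $g_n(m_n\epsilon_n) = g_n\cdot m_n\epsilon_n^{2/3}\cdot\epsilon_n^{1/3} \sim C g_n \epsilon_n^{1/3}$, and since $g_n = o(m_n)$ and $m_n \sim C\epsilon_n^{-2/3}$, we get $g_n\epsilon_n^{1/3} = o(\epsilon_n^{-2/3})\cdot\epsilon_n^{1/3} = o(\epsilon_n^{-1/3})$, which does not obviously vanish---so the decisive gain must come from the fact that the relative error on good sub-squares is $o(1)$ uniformly (not merely $O((m_n\epsilon_n)^2)$), giving a good-sub-square total of $o(1)\cdot P_X^T(u)$, and more precisely $o(m_n\epsilon_n)\cdot(\text{number of arcs})$; the careful bookkeeping shows the dominant error is $O(1/m_n) = o(1/g_n)$ after multiplying by $g_n$. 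The main obstacle is exactly this sharp accounting on the good sub-squares: one must show the total error there is $o(1/g_n)$, which requires the curvature-type estimate to be summed with the correct powers of $m_n\epsilon_n$ and the number of sub-squares, and requires handling the $\pm 1$ edge-miscounts at the boundaries between adjacent sub-squares so that they telescope or are absorbed into the $O(1/m_n)$ term rather than accumulating. A secondary technical point is making the threshold ``$\epsilon_n$ small enough'' uniform over the a.s.\ event where the boundary is suitably regular, which is handled by choosing, for each $\omega$ in that event, $n$ large enough depending on the (finite, $\omega$-dependent) geometric data of the curve.
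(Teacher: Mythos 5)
Your overall strategy coincides with the paper's: tile $T$ by the boxes $B_{s_{i,j}}^{(m_n\epsilon_n)}$ indexed by $I^{(T,\epsilon_n,m_n)}$, read $\epsilon_n N_{X,h}$ and $\epsilon_n N_{X,v}$ as Riemann sums for the coordinatewise total variations of the boundary piece in each box (error $O(\epsilon_n)$ per box), sandwich $\big|\big|(\mathrm{TV}_1,\mathrm{TV}_2)\big|\big|_2$ between the chord length and the arc length and control the gap by a curvature/Taylor estimate (error $O((m_n\epsilon_n)^3)$ per box), and multiply by the $O\big(P_X^T(u)/(m_n\epsilon_n)\big)$ boxes that meet the curve. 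One remark on bookkeeping: your good-box curvature total should be $(m_n\epsilon_n)^{-1}\cdot O((m_n\epsilon_n)^3)=O\big((m_n\epsilon_n)^2\big)=O\big(m_n^3\epsilon_n^2/m_n\big)=O(1/m_n)$, since $m_n^3\epsilon_n^2\to C^3$; the $O(m_n\epsilon_n)$ you write is a miscount, and the subsequent worry about whether $g_n\,m_n\epsilon_n\to 0$ dissolves once this is corrected, because both the curvature term and the $O(\epsilon_n)$-per-box term then total $O(1/m_n)$ and $g_n=o(m_n)$ finishes.

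The substantive gap is your treatment of the \emph{bad} boxes, i.e.\ those containing a point of $\mathcal{Y}_X^T(u)$, where the curve turns around and meets the box in two components. The crude bound ``$O(m_n\epsilon_n)$ error each, $O(1)$ of them'' does not suffice for the stated conclusion: with $m_n\epsilon_n^{2/3}\to C$ one has $m_n\epsilon_n=\Theta(m_n^{-1/2})$, and multiplying by, say, $g_n=m_n/\log m_n=o(m_n)$ gives $m_n^{1/2}/\log m_n\to\infty$. Since the theorem asserts the rate for \emph{every} $g_n=o(m_n)$, these finitely many boxes must be shown to obey the same $O((m_n\epsilon_n)^3)+O(\epsilon_n)$ per-box error as the generic ones. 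That is precisely Case~2 of the paper's proof: adjoin the adjacent box through which the curve continues, parametrize the now-connected curve $\tilde\gamma$, apply the chord/total-variation/arc-length sandwich to $\tilde\gamma$, and then use the triangle inequality for the $2$-norm together with the one-sided inequalities $l(1;s_{i,j})\geq\big|\big|(\mathrm{TV}_1(1;s_{i,j}),\mathrm{TV}_2(1;s_{i,j}))\big|\big|_2$ in each individual box to redistribute the combined bound into per-box bounds of the same order. Without this step (or an equivalent one), your argument only yields the $g_n\equiv 1$ statement, i.e.\ Corollary~\ref{cor:as_convergence}, not Theorem~\ref{thm:consistent_Phat}. (Note also that under the resolution condition $m_n\epsilon_n<\Lambda_X^T(u)$, your third category of bad boxes---two distinct arcs approaching within $m_n\epsilon_n$ of each other---cannot occur except through such a turning point, so Case~2 is the only exceptional configuration that needs a sharp bound.)
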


The proof of Theorem~\ref{thm:consistent_Phat} is postponed to Section~\ref{sec:proofs}.

\begin{rema}
Theorem~\ref{thm:consistent_Phat} is a statement about the \textit{multigrid convergence} (see, for instance, Definition~2 of \cite{coeurjolly2004}) of $\hat{P}^{(2)}_X(\epsilon_n, m_n; T, u)$ to ${P}_{X}^T(u)$ as $n \rightarrow \infty$ for almost all sample paths of the random field $X$. The \textit{speed} of this convergence is $O(1/m_n)$.
\end{rema}

Theorem~\ref{thm:consistent_Phat} requires that the vertices of $T$ are in $\mathcal{G}^{(T,\epsilon_n)}$ for all $n\in\N^+$, for example as depicted in Figure~\ref{fig:sampling}. This prevents the possibility of there being long segments of $E^\partial_X(T,u)$ that remain close to the border of $T$ so as to not pass between elements of $\mathcal{G}^{(T,\epsilon_n)}$. In addition, it is supposed that the sequence $(m_n)_{n\geq 1}$ is asymptotically equivalent to $(\epsilon_n^{-2/3})_{n\geq 1}$, which gives the fastest possible rate of convergence of $\hat{P}^{(2)}_X(\epsilon_n, m_n; T, u)$ to ${P}_{X}^T(u)$. By relaxing this condition, we obtain the following corollary.

\begin{coro}\label{cor:as_convergence}
Under the conditions of Theorem~\ref{thm:consistent_Phat}, if the requirement that $m_n\epsilon_n^{2/3}\rightarrow C$ is relaxed to $m_n\epsilon_n\rightarrow 0$, it holds that
$$\hat{P}^{(2)}_X(\epsilon_n, m_n; T, u) \stackrel{\mathrm{a.s.}}{\longrightarrow} P_X^T(u),\qquad n\rightarrow\infty.$$
\end{coro}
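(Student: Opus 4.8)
The plan is to obtain Corollary~\ref{cor:as_convergence} by revisiting the proof of Theorem~\ref{thm:consistent_Phat} rather than invoking it verbatim, since the two hypotheses on the pair $(m_n,\epsilon_n)$ are not directly comparable at the level of the statement. One first checks that the new condition is genuinely weaker: if $m_n\epsilon_n^{2/3}\to C\in\R^+$ with $m_n\to\infty$, then $\epsilon_n^{2/3}=(m_n\epsilon_n^{2/3})/m_n\to 0$, so $\epsilon_n\to 0$ and $m_n\epsilon_n=(m_n\epsilon_n^{2/3})\,\epsilon_n^{1/3}\to 0$; conversely, $m_n\epsilon_n\to 0$ together with $m_n\to\infty$ permits, for instance, $m_n\asymp\epsilon_n^{-1/2}$ or $m_n\asymp\epsilon_n^{-3/4}$, for which $m_n\epsilon_n^{2/3}$ tends respectively to $0$ and to $\infty$. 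Accordingly we should expect to retain almost sure convergence but forfeit the quantitative rate, which is precisely why the conclusion of the corollary carries no factor $g_n$.

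The essential point is that, on the almost sure event on which $X$ is suitably regular at the level $u$ in $T$ (a consequence of Assumptions~\ref{ass:basic} and~\ref{ass:weak} together with \eqref{eqn:gradient_ass_k}), the pathwise error $|\hat P^{(2)}_X(\epsilon,m;T,u)-P_X^T(u)|$ splits into a \emph{discretization} term and a \emph{linearization} term of different natures. The discretization term is present because the grid $\mathcal{G}^{(T,\epsilon)}$ locates $E^\partial_X(T,u)$ only to precision $\epsilon$: within each $m\times m$ subrectangle the pair $(N_{X,h},N_{X,v})$ appearing in \eqref{eqn:phat} reproduces the horizontal and vertical extents of the corresponding piece of the curve up to an additive $O(1)$, so that after rescaling by $\epsilon$ and summing over the $O\big((m\epsilon)^{-1}\big)$ subrectangles met by the curve this term is $O(1/m)$, requiring only $m\to\infty$. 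The linearization term is present because each arc of $E^\partial_X(T,u)$ inside a subrectangle is replaced by the single chord of length $\epsilon\,\|(N_{X,h},N_{X,v})\|_2$; since $X$ has $C^2$ sample paths, the length of such an arc, of diameter $O(m\epsilon)$, exceeds that of its chord by at most $O\big((m\epsilon)^3\big)$ times a sample-path bound on the curvature of the level curve, so that summing over the $O\big((m\epsilon)^{-1}\big)$ relevant subrectangles gives $O\big((m\epsilon)^2\big)$, requiring only $m\epsilon\to 0$. Under the hypothesis of Theorem~\ref{thm:consistent_Phat} both terms are of the common order $\epsilon_n^{2/3}\asymp 1/m_n$ (the balance $1/m\asymp(m\epsilon)^2$ forces $m\asymp\epsilon^{-2/3}$), which is what makes $g_n=o(m_n)$ admissible; under the relaxed hypothesis one simply notes that $O(1/m_n)\to 0$ because $m_n\to\infty$ and $O\big((m_n\epsilon_n)^2\big)\to 0$ because $m_n\epsilon_n\to 0$, with no matching of the two orders required. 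Summing and letting $n\to\infty$ gives the claim.

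I expect the main obstacle to be making this decoupling genuinely rigorous in the relaxed regime: in the proof of Theorem~\ref{thm:consistent_Phat} the two error orders are balanced, and one must verify that neither bound covertly exploited $m\lesssim\epsilon^{-2/3}$ — in particular that the $O\big((m\epsilon)^2\big)$ linearization bound is uniform over the admissible range of $m$ — and one must control the behaviour near the finitely many problematic locations, namely the set $\mathcal{Y}_X^T(u)$ of \eqref{eqn:curly_Y} where the level curve is tangent to a coordinate axis, together with the points where $E^\partial_X(T,u)$ meets $\partial T$, at which $E^\partial_X(T,u)$ need not be expressible as a graph over either coordinate and the local picture underlying the counting of $(N_{X,h},N_{X,v})$ degenerates. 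By \eqref{eqn:gradient_ass_k} and suitable regularity there are only finitely many such points, each met by $O(1)$ subrectangles, each contributing $O(m\epsilon)$ to the estimator, so that their total contribution is $O(m\epsilon)\to 0$; here the inherited requirement that the vertices of $T$ lie in $\mathcal{G}^{(T,\epsilon_n)}$ — which prevents a long arc of $E^\partial_X(T,u)$ from running along $\partial T$ without separating grid points — is what keeps the count of affected subrectangles bounded. Turning these crude estimates into precise bounds valid uniformly along the sequence is the bulk of the work.
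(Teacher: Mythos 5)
Your proposal is correct and follows essentially the same route as the paper: the error bound established in the proof of Theorem~\ref{thm:consistent_Phat} (equation~\eqref{eqn:penultimate_thm1}) is derived using only the resolution condition $m_n\epsilon_n<\Lambda_X^T(u)$, and with $g_n\equiv 1$ it decomposes exactly into your $O\big((m_n\epsilon_n)^2\big)+O(1/m_n)$ terms, each of which vanishes under the relaxed hypothesis. The verification you flag as the ``main obstacle'' is immediate, since nothing in the paper's derivation of that bound uses the balance $m_n\asymp\epsilon_n^{-2/3}$, and the contributions near $\mathcal{Y}_X^T(u)$ are already absorbed into Case~2 of that proof.
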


The proof is postponed to Section~\ref{sec:proofs}. The following proposition shows that convergence in $L^1(\Omega)$ holds under slightly stronger assumptions. The proof can also be found in Section~\ref{sec:proofs}.

\begin{prop}\label{prp:L1}
Let $(m_n)_{n\geq 1}$ be a non-decreasing sequence in $\N^+$ tending to $\infty$ as $n\rightarrow\infty$.
Let $(\epsilon_n)_{n\geq 1}$ be a sequence in $\R^+$ such that $m_n\epsilon_n\rightarrow 0$ as $n\rightarrow\infty$, and that the vertices of $T$ are contained in $\mathcal{G}^{(T,\epsilon_n)}$ for all $n\in \N^+$. Then under Assumptions \ref{ass:basic}, \ref{ass:weak}, and \ref{ass:integrable_quantities},
$$\big|\hat{P}^{(2)}_X(\epsilon_n, m_n; T, u) - {P}_{X}^T(u)\big | \stackrel{L^1}{\longrightarrow} 0,\qquad n\rightarrow\infty,$$
for any fixed $u\in\R$.
\end{prop}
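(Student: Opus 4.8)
The plan is to upgrade the almost-sure convergence of Corollary~\ref{cor:as_convergence} to $L^1$ convergence by establishing uniform integrability of the error sequence $D_n := \big|\hat{P}^{(2)}_X(\epsilon_n, m_n; T, u) - {P}_{X}^T(u)\big|$. By Corollary~\ref{cor:as_convergence} we already have $D_n \to 0$ almost surely under the assumption $m_n\epsilon_n \to 0$, so by Vitali's convergence theorem it suffices to show that $(D_n)_{n\geq 1}$ is uniformly integrable. Since $D_n \leq \hat{P}^{(2)}_X(\epsilon_n, m_n; T, u) + P_X^T(u)$ and $P_X^T(u) \in L^1(\Omega)$ by Assumption~\ref{ass:integrable_quantities}, the crux is to dominate $\hat{P}^{(2)}_X(\epsilon_n, m_n; T, u)$ by a fixed integrable random variable, uniformly in $n$ (or, slightly more weakly, to show the family $\{\hat{P}^{(2)}_X(\epsilon_n, m_n; T, u)\}_n$ is uniformly integrable).

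The key step is therefore a deterministic, sample-path-wise upper bound of the form
\begin{equation*}
\hat{P}^{(2)}_X(\epsilon_n, m_n; T, u) \;\leq\; C_1\, P_X^T(u) + C_2\, \#\big(\Gamma_X^T(u)\big) + C_3,
\end{equation*}
valid for all sufficiently large $n$ (equivalently, once $m_n\epsilon_n$ is small enough, which may depend on the sample path), with universal constants $C_1, C_2, C_3$. The right-hand side belongs to $L^1(\Omega)$ precisely by Assumption~\ref{ass:integrable_quantities}. To obtain such a bound I would reuse the geometric machinery underlying the proof of Theorem~\ref{thm:consistent_Phat}: each nonzero term $\epsilon_n\,||(N_{X,h},N_{X,v})||_2$ in~\eqref{eqn:phat} corresponds to an $m_n\times m_n$ subrectangle of the grid through which the curve $E^\partial_X(T,u)$ passes, and within such a block the piecewise-linear length $\epsilon_n\,||(N_{X,h},N_{X,v})||_2$ is comparable to the true arclength of $E^\partial_X(T,u)$ inside (a slight dilation of) that block, up to an additive error controlled by the number of times the curve enters and leaves the block and by the number of points in $\mathcal{Y}_X^T(u)$ where the tangent is axis-parallel. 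Summing over blocks, the overlaps are bounded (each point of $T$ lies in a bounded number of blocks), the total true arclength is $P_X^T(u)$, and the bookkeeping of entries/exits and connected components is controlled by $\#\big(\Gamma_X^T(u)\big)$ plus a constant depending only on $T$ (via the number of blocks meeting $\partial(T)$, which is $O(1/m_n) \cdot O(1/\epsilon_n)$ — here one must be careful, but the boundary contribution is itself dominated by the length term once the vertices of $T$ lie on the grid). The finiteness~\eqref{eqn:gradient_ass_k} of $\#\big(\mathcal{Y}_X^T(u)\big)$ ensures this random constant is a.s.\ finite, but to get a genuine $L^1$ domination I would route the bound through $\#\big(\Gamma_X^T(u)\big)$ and $P_X^T(u)$ only, absorbing the $\mathcal{Y}_X^T(u)$ contribution into a crude geometric estimate (e.g.\ each axis-parallel-tangent point contributes at most one extra block of spurious length $O(m_n\epsilon_n) = o(1)$).

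As an alternative to constructing an explicit $L^1$ dominating variable, one can argue uniform integrability more softly: fix $\delta>0$; since $\hat P^{(2)}_X \to P_X^T(u)$ a.s.\ and $P_X^T(u)\in L^1$, for any $\eta>0$ choose a large threshold $K$ so that $\E[P_X^T(u)\I{P_X^T(u)>K/2}]<\eta$, then use the block bound above to show $\hat P^{(2)}_X\I{\hat P^{(2)}_X > K} \leq (\text{const})\big(P_X^T(u)\I{P_X^T(u)>cK} + \#(\Gamma_X^T(u))\I{\#(\Gamma_X^T(u))>cK}\big)$ eventually; integrability of the two dominating variables then makes the tail expectation small uniformly in large $n$, and the finitely many small-$n$ terms are handled individually. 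Either route reduces the proposition to the same geometric lemma.

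The main obstacle I anticipate is the sample-path dependence of the threshold "$n$ large enough": the comparison between the piecewise-linear block length and the true arclength in Theorem~\ref{thm:consistent_Phat}'s proof holds only once $m_n\epsilon_n$ is smaller than some random scale (governed by the reach of $E^\partial_X(T,u)$ and the minimal spacing of the points in $\mathcal{Y}_X^T(u)$), so I need a crude but genuinely uniform-in-$n$ bound on $\hat P^{(2)}_X$ that holds for \emph{all} $n$, not just large ones, in order to close the uniform integrability argument — or else I must split the sequence into a random-but-a.s.-finite initial segment plus a tail, and argue that the initial segment, though random in length, still contributes a uniformly integrable family. Handling this cleanly — most likely by proving one robust deterministic inequality $\hat P^{(2)}_X(\epsilon_n,m_n;T,u) \leq C\big(P_X^T(u) + \#(\Gamma_X^T(u)) + 1\big)$ with $C$ independent of $n$ and valid unconditionally — is where the real work lies; the rest is Vitali's theorem.
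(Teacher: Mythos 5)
Your high-level strategy is the same as the paper's: invoke Corollary~\ref{cor:as_convergence} for almost-sure convergence and then upgrade to $L^1$ via uniform integrability, which reduces everything to dominating $\hat{P}^{(2)}_X(\epsilon_n,m_n;T,u)$ by a single integrable random variable uniformly in $n$. You correctly identify the target inequality $\hat P^{(2)}_X(\epsilon_n,m_n;T,u)\le C\big(P_X^T(u)+\#(\Gamma_X^T(u))+1\big)$ and correctly diagnose the obstacle — that the arclength-versus-block-length comparison from the proof of Theorem~\ref{thm:consistent_Phat} only kicks in once $m_n\epsilon_n$ falls below the random resolution scale $\Lambda_X^T(u)$. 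But you stop there: the domination lemma, which you yourself call ``where the real work lies,'' is left unproven, and the route you sketch toward it (comparing block-level piecewise-linear length to true arclength, with corrections for entries/exits and for $\mathcal{Y}_X^T(u)$) is exactly the route that cannot be made unconditional in $n$. Your fallback of splitting off a ``random-but-a.s.-finite initial segment'' does not close the argument either, because uniform integrability of a family indexed by a random initial segment is not something you establish.

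The missing ingredient is much cruder than any arclength comparison, and it bypasses the resolution issue entirely: since $\|\cdot\|_2\le\|\cdot\|_1$ termwise in~\eqref{eqn:phat}, one has $\hat P^{(2)}_X(\epsilon_n,m_n;T,u)\le \hat P^{(1)}_X(\epsilon_n;T,u)$ almost surely for every $n$, with no condition on $m_n\epsilon_n$ whatsoever. The $p=1$ estimator is $\epsilon_n$ times a count of color-changing pixel edges, hence at most $4\epsilon_n G_n$ where $G_n$ is the number of pixels of side $\epsilon_n$ meeting $E^\partial_X(T,u)$; and the covering argument of Lemma~\ref{lem:intersecting_boxes_bound} applied with $m=1$ gives $G_n\le 4\big(P_X^T(u)/\epsilon_n+\#(\Gamma_X^T(u))\big)$ almost surely, again unconditionally. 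This yields
\begin{equation*}
\hat P^{(2)}_X(\epsilon_n,m_n;T,u)\;\le\;16\Big(P_X^T(u)+\sup_n(\epsilon_n)\,\#\big(\Gamma_X^T(u)\big)\Big)\as
\end{equation*}
which is the $L^1$ dominating variable you were looking for, valid for \emph{all} $n$ by Assumption~\ref{ass:integrable_quantities}. Without this (or an equivalent unconditional bound) your proof is incomplete; with it, the rest of your argument goes through as written.
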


\begin{rema}
It is shown in Proposition~5 of \cite{bierme2021} that for a random field $X$ satisfying Assumption~\ref{ass:basic}, if, in addition, $X$ is stationary, Gaussian, isotropic, and the supremum of the first and second order partial derivatives of $X$ in the domain $T$ are in $L^1(\Omega)$, then
\begin{equation}\label{eqn:bierme_converge}
\E[\hat{P}_{X}^{(1)}(\epsilon; T, u)] \rightarrow \frac{4}{\pi}\E[P_X^T(u)],
\end{equation}
as $\epsilon \rightarrow 0$.
Proposition~\ref{prp:L1} is a stronger result under weaker assumptions on $X$. With neither Gaussianity, stationarity, nor isotropy imposed on $X$, it holds that
$$\E[\hat{P}_{X}^{(2)}(\epsilon,m; T, u)] \rightarrow \E[P_X^T(u)],$$
as $\epsilon \rightarrow 0$ and $m\rightarrow\infty$ under the constraint $m\epsilon \rightarrow 0$.
Thus, the estimator $\hat{P}_{X}^{(2)}(\epsilon, m; T, u)$ does not suffer from the asymptotic bias factor of $4/\pi$.
\end{rema}

\subsection{On a growing domain with decreasing pixel width}\label{sec:results_growingT}

In this section, the performance of $\hat{P}_{X}^{(2)}(\epsilon_n,m_n; T_n, u)$ is investigated for sequences $(\epsilon_n)_{n\geq 1}$, $(m_n)_{n\geq 1}$, and $(T_n)_{n\geq 1}$ satisfying $\epsilon_n\rightarrow 0$, $m_n\rightarrow\infty$, and $T_n \nearrow \R^2$ as $n\rightarrow\infty$. To manage the added complexity of the sequence of growing domains, first define
$$T_n:= \{ns:s\in T\},$$
such that $T_n$ is a dilation of the fixed domain $T=[-t,t]^2$. The side length of the square domain $T_n$ is then $2tn$. The challenge then becomes determining sufficient asymptotic relations for the sequences $(\epsilon_n)_{n\geq 1}$ and $(m_n)_{n\geq 1}$ to ensure desirable statistical properties of our estimator.

\subsubsection{Asymptotics for the pixel width}\label{sec:results_growingT1}

We relate the domain size with an appropriate pixel width by defining \emph{resolution} in the context of excursion sets of random fields, inspired by the notion of optical resolution.

\begin{defi}\label{def:resolution}
Define the random variable
\begin{equation*}
\Lambda_X^T(u) := \min\Big\{\mathrm{reach}\big( E_X(T,u)\big),\ \mathrm{reach}\big(T \setminus E_X(u)\big),\ \mathrm{reach}\big(\mathcal{Y}_{X}^T(u)\big)\Big\}.
\end{equation*}
For $\lambda\in\R^+$, we say that `` $E_X(u)$ \textit{is resolved by} $\lambda$ \textit{in} $T$ " whenever the random event $\{\lambda < \Lambda_X^T(u)\}$ occurs.
\end{defi}

This makes $\Lambda_X^T(u)$ a random geometrical description of $E_X(u)$ in the domain $T$: $\Lambda_X^T(u)$ is the supremum of the set of $\lambda\in\R^+$ such that one can roll a ball of radius $\lambda$ along both sides of the curve $E^\partial_X(T,u)$, and that the distances between points in $\mathcal{Y}_X^T(u)$ are all at least $2\lambda$. Figure~\ref{fig:reach} clarifies some of the notions introduced in Definition~\ref{def:resolution}. This definition allows us to relate the domain size with the pixel width, since the estimation error can be bounded in the case where $E_X(u)$ is resolved by $m_n\epsilon_n$ in $T_n$ (see the proof of Theorem~\ref{thm:consistent_Phat}).

\begin{figure}
    \centering
    \includegraphics[width=0.4\linewidth]{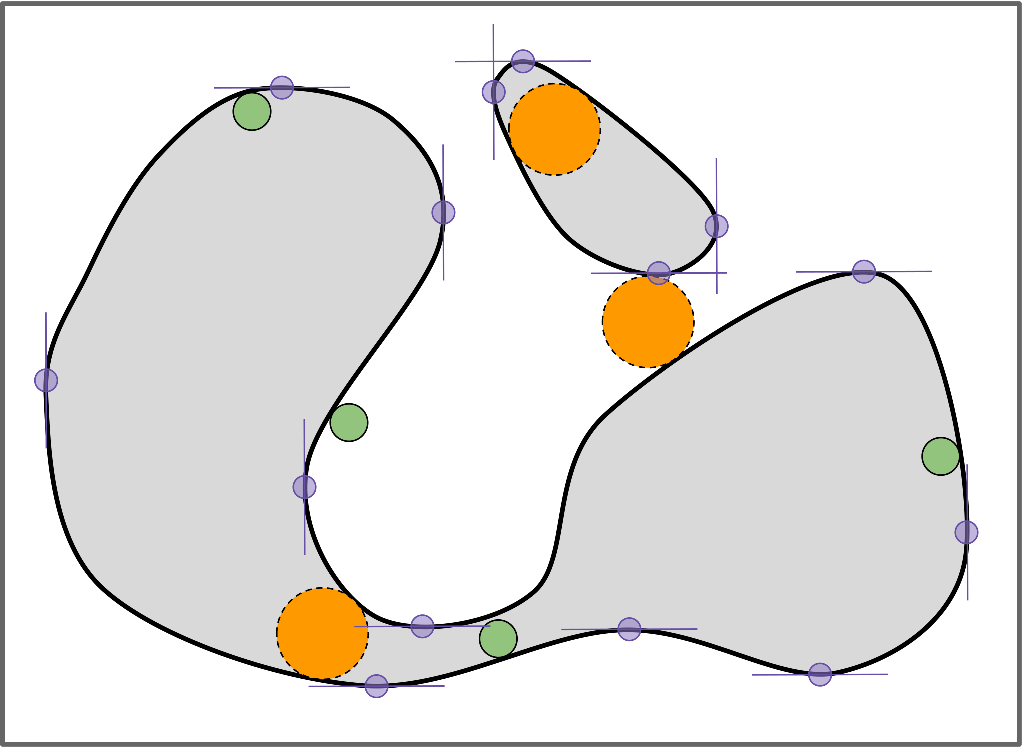}
    \put(-135,85){$E_X(u)$}
    \put(-153,105){$T$}
    \caption{Illustration of the notions of reach and resolution in Definition~\ref{def:resolution}. The reach of $E_X(T,u)$ is greater than the radius, $r_{\mathrm{green}}$, of the small green circles with solid border. The reach of $T \setminus E_X(u)$ is also greater than $r_{\mathrm{green}}$. Moreover, the minimum distance between points in $\mathcal{Y}_{X}^T(u)$, highlighted in purple, exceeds $2r_{\mathrm{green}}$. Therefore, $E_X(u)$ is \textit{resolved} by $r_{\mathrm{green}}$ in $T$ (see Definition~\ref{def:resolution}). Conversely, it is clear that $E_X(u)$ is not resolved in $T$ by the radius of the larger orange circles with dashed border.}
    \label{fig:reach}
\end{figure}
%

\begin{rema}\label{rem:positive_as}
Under Assumptions \ref{ass:basic} and \ref{ass:weak}, the random sets $E_X(T,u)$ and $T \setminus E_X(u)$ have positive reach almost surely, since $E_X(u)$ and $E_{-X}(u)$ have a twice differentiable boundary everywhere in $T$, almost surely, for all $u\in\R$. The intersection of these sets with the compact rectangle $T$ guarantees that the reach of each intersection is positive \citep[][p. 541]{bierme2019}. The minimum distance between points in $\mathcal{Y}_{X}^T(u)$ is positive by equation~\eqref{eqn:gradient_ass_k} and the compactness of $T$.
Therefore, $\Lambda_X^T(u)$ in Definition~\ref{def:resolution}  is almost surely positive for all $u\in\R$. Equivalently, for any $u\in\R$,
\begin{equation*}
\P\big(\liminf_{\lambda\rightarrow0}\big\{ \lambda < \Lambda_X^T(u)\big\}\big) = 1,
\end{equation*}
\textit{i.e.}, with probability 1, there exists a sufficiently small positive $\lambda$ that resolves $E_X(u)$ in $T$.
\end{rema}


With the notion of resolution established, we state an important convergence result for the sequence of growing domains $(T_n)_{n\geq 1}$ under general regularity assumptions.

\begin{prop}\label{prp:Tn} Let $X$ be a random field satisfying Assumptions~\ref{ass:basic},~\ref{ass:weak}, and~\ref{ass:integrable_quantities}. Let $(m_n)_{n\geq 1}$ be a non-decreasing sequence in $\N^+$ such that $m_n/n\rightarrow\infty$.
Let $(\epsilon_n)_{n\geq 1}$ be a non-increasing sequence in $\R^+$ satisfying $\epsilon_n = O\big(m_n^{-3/2}\big)$. Moreover, suppose that $2t$ is an integer multiple of $\epsilon_n$ for all $n\in\N^+$, and $\P\big(m_n\epsilon_n < \Lambda_X^{T_n}(u)\big)\rightarrow 1$ as $n\rightarrow\infty$. Then for any $u\in\R$,
\begin{equation*}
    \frac{\hat{P}^{(2)}_{X}(\epsilon_{n}, m_{n}; T_n, u) - {P}_{X}^{T_n}(u)}{\sqrt{\nu(T_n)}} \stackrel{\P}{\longrightarrow} 0,
\end{equation*}
as $n\rightarrow\infty$.
\end{prop}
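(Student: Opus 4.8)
The plan is to run the argument on the high‑probability event that $E_X(u)$ is resolved by $m_n\epsilon_n$ in $T_n$, where the proof of Theorem~\ref{thm:consistent_Phat} already supplies a \emph{deterministic} error bound; one then checks that, after dividing by $\sqrt{\nu(T_n)}=2tn$, the geometric functionals of $E_X(u)$ entering that bound are killed by the deterministic prefactors, which the hypotheses $\epsilon_n=O(m_n^{-3/2})$ and $m_n/n\to\infty$ force to decay fast enough. Concretely, fixing $\delta>0$ and setting $\mathcal{E}_n:=\{m_n\epsilon_n<\Lambda_X^{T_n}(u)\}$ (the event that $E_X(u)$ is resolved by $m_n\epsilon_n$ in $T_n$, Definition~\ref{def:resolution}), I would split
\begin{equation*}
\P\!\left(\frac{\big|\hat{P}^{(2)}_{X}(\epsilon_{n}, m_{n}; T_n, u) - P_{X}^{T_n}(u)\big|}{\sqrt{\nu(T_n)}} > \delta\right) \;\le\; \P\big(\mathcal{E}_n^c\big) \;+\; \P\!\left(\big|\hat{P}^{(2)}_{X}(\epsilon_{n}, m_{n}; T_n, u) - P_{X}^{T_n}(u)\big|\,\mathbf{1}_{\mathcal{E}_n} > \delta\sqrt{\nu(T_n)}\right),
\end{equation*}
where $\P(\mathcal{E}_n^c)\to0$ by hypothesis, so only the second summand needs attention.

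On $\mathcal{E}_n$ I would apply the deterministic estimate underlying Theorem~\ref{thm:consistent_Phat}. Since $2t$, hence $2tn$, is an integer multiple of $\epsilon_n$, the vertices of $T_n$ lie on $\mathcal{G}^{(T_n,\epsilon_n)}$, and since $E_X(u)$ is resolved by $m_n\epsilon_n$ in $T_n$, that proof yields a bound of the shape
\begin{equation*}
\big|\hat{P}^{(2)}_{X}(\epsilon_{n}, m_{n}; T_n, u) - P_{X}^{T_n}(u)\big|\,\mathbf{1}_{\mathcal{E}_n} \;\le\; \frac{c_1}{m_n}\big(P_X^{T_n}(u) + \#(\Gamma_X^{T_n}(u))\big) \;+\; c_2\,(m_n\epsilon_n)^2\,\mathcal{K}_X^{T_n}(u) \;+\; c_3\,\epsilon_n\,\#(\Gamma_X^{T_n}(u)),
\end{equation*}
with universal constants $c_1,c_2,c_3$ and $\mathcal{K}_X^{T_n}(u):=\int_{E^\partial_X(T_n,u)}\kappa^2\,\d\mathcal{H}^1$ the integrated squared curvature of the level curve — the functional that, through a Cauchy--Schwarz estimate on the turning of the (single, short) arc lying in each $m_n\times m_n$ block, governs the replacement of that arc by a chord; the $\#(\Gamma_X^{T_n}(u))$‑terms collect the blocks meeting $\partial T_n$ and the blocks where a boundary component is first entered, and the resolution condition ensures that these blocks, as well as blocks containing a point of $\mathcal{Y}_X^{T_n}(u)$, contribute only such lower‑order terms. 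For a fixed domain this is exactly the content of Theorem~\ref{thm:consistent_Phat}: $\mathcal{K}_X^{T}(u)<\infty$ almost surely by Assumptions~\ref{ass:basic} and \ref{ass:weak}, and the three terms are each $O(1/m_n)$.

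By Markov's inequality the second probability above is at most $(2t\delta n)^{-1}$ times the expectation of the right‑hand side of the last display, so it suffices that this expectation be $o(n)$. I would use here that $\E[P_X^{T_n}(u)]$, $\E[\#(\Gamma_X^{T_n}(u))]$ and $\E[\mathcal{K}_X^{T_n}(u)]$ are all $O(\nu(T_n))=O(n^2)$: for the first two this follows from Assumption~\ref{ass:integrable_quantities} and the (super)additivity of these functionals over the $O(n^2)$ congruent cells tiling $T_n$, and for $\mathcal{K}$ from the analogous additivity together with a Kac--Rice/coarea bound. Since $(m_n\epsilon_n)^2=O(1/m_n)$ and $\epsilon_n=O(m_n^{-3/2})$, the three contributions to the expectation are then $O(n^2/m_n)$, $O(n^2/m_n)$ and $O(n^2 m_n^{-3/2})$; dividing by $n$ and using $m_n/n\to\infty$, all tend to $0$. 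Hence the second probability vanishes and $\big(\hat{P}^{(2)}_{X}(\epsilon_{n}, m_{n}; T_n, u) - P_{X}^{T_n}(u)\big)/\sqrt{\nu(T_n)}\stackrel{\P}{\longrightarrow}0$.

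The main obstacles are twofold. First, one must isolate the purely deterministic core of the proof of Theorem~\ref{thm:consistent_Phat} and verify that its constants do not deteriorate as the domain grows, and in particular that near‑boundary blocks and blocks containing a point of $\mathcal{Y}_X^{T_n}(u)$ contribute only $O(1/m_n)$ or $O(\epsilon_n)$ per occurrence and not $O(m_n\epsilon_n)$ — the latter would force the stronger requirement $m_n/n^2\to\infty$ rather than $m_n/n\to\infty$. Second, and more delicate since neither stationarity nor isotropy is assumed, is establishing the $O(\nu(T_n))$ growth of $\E[P_X^{T_n}(u)]$, $\E[\#(\Gamma_X^{T_n}(u))]$ and $\E[\mathcal{K}_X^{T_n}(u)]$ from Assumptions~\ref{ass:basic}--\ref{ass:integrable_quantities}; this is where the additivity/covering argument has to be carried out with care.
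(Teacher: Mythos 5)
Your overall strategy is the paper's: condition on the resolution event $\{m_n\epsilon_n<\Lambda_X^{T_n}(u)\}$, invoke the deterministic error bound established in the proof of Theorem~\ref{thm:consistent_Phat} on that event, control the resulting geometric functionals by tiling $T_n$ into $n^2$ congruent copies of $T_1$ together with Assumption~\ref{ass:integrable_quantities}, and finish with the rates $\epsilon_n=O(m_n^{-3/2})$ and $m_n/n\to\infty$. Your use of Markov's inequality on the good event is a harmless variant of the paper's route (the paper instead argues that $P_X^{T_n}(u)/\nu(T_n)$ and $\#(\Gamma_X^{T_n}(u))/\nu(T_n)$ have almost surely finite limit superior, lets the bound tend to $0$ almost surely, and only then converts to convergence in probability).

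There is, however, one genuine gap: the shape you assign to the deterministic bound is not the one the proof of Theorem~\ref{thm:consistent_Phat} provides, and the version you use is not justified by the stated hypotheses. You replace the per-block chord-versus-arc error by a term $c_2(m_n\epsilon_n)^2\,\mathcal{K}_X^{T_n}(u)$ with $\mathcal{K}_X^{T_n}(u)=\int_{E^\partial_X(T_n,u)}\kappa^2\,\d\mathcal{H}^1$, and your argument then needs $\E\big[\mathcal{K}_X^{T_n}(u)\big]=O(n^2)$, i.e.\ integrability of the squared curvature of the level curve over a unit cell. Assumptions~\ref{ass:basic}--\ref{ass:integrable_quantities} only give integrability of $P_X^{T_1}(u)$ and $\#\big(\Gamma_X^{T_1}(u)\big)$; the curvature of a level curve involves second derivatives divided by $||\nabla X||_2$ and its expected square-integral need not be finite without further density or moment conditions, so the ``Kac--Rice/coarea bound'' you appeal to is not available from the hypotheses. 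The paper sidesteps this entirely: on the resolution event the curvature of each arc is bounded uniformly by $1/(m_n\epsilon_n)$, and the Dubins/Schwartz arc comparison yields a per-block error of $K(\sqrt2 m_n\epsilon_n)^3+4\sqrt2\epsilon_n$ with no new random functional, so that the bound in equation~\eqref{eqn:penultimate_thm1} involves only $P_X^{T_n}(u)$ and $\#\big(\Gamma_X^{T_n}(u)\big)$---exactly the quantities Assumption~\ref{ass:integrable_quantities} controls. Substituting that bound into your argument closes the gap with no other changes.
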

The proof of Proposition~\ref{prp:Tn} is postponed to Section~\ref{sec:proofs}.

\begin{rema}
One example of a sequence $(\epsilon_n)_{n\geq 1}$ satisfying the constraints in Proposition~\ref{prp:Tn} is constructed by letting $\epsilon_n$ be the largest element in the sequence $(2t/k)_{k\geq 1}$ such that $\epsilon_n \leq m_n^{-3/2}$ and $\P\big(\Lambda_X^{T_n}(u) \leq m_n\epsilon_n\big) \leq 1/n$, where $\Lambda_X^{T_n}(u)$ is defined in Definition~\ref{def:resolution}. Such a sequence $(\epsilon_n)_{n\geq 1}$ exists since $\P(\Lambda_X^{T_n}(u)\leq 0)=0$ for all $n\in\N^+$ as discussed in Remark \ref{rem:positive_as}. The idea is to have the sequence $\lambda_n := m_n\epsilon_n$ tend to 0 faster than the quantiles of $\Lambda_X^{T_n}(u)$, which is difficult to verify analytically. However, in practice, for a given realization of $E_X(u)$, one can estimate $\Lambda_X^{T}(u)$ by first estimating the reach of the sets $E_X(T,u)$ and $T\setminus E_X(u)$ \citep{aamari2019, cotsakis2022_2} and the vector coordinates of the points in $\mathcal{Y}_X^T(u)$, defined in~\eqref{eqn:curly_Y}.
\end{rema}

Proposition~\ref{prp:Tn} establishes that for a large class of random fields, as the domain grows and the grid spacing decreases, the error in the perimeter estimation is negligible compared to the side length of the domain. Such a comparison is made possible by the conditions on the sequences $(m_n)_{n\geq 1}$ and $(\epsilon_n)_{n\geq 1}$, since the indexing variable $n$ is proportional to the side length of $T_n$.

\subsubsection{Asymptotic normality of the perimeter estimator}\label{sec:asymptotic_normality}

In this section, we prove a multivariate Central Limit Theorem for our estimator as stated in Theorem~\ref{thm:CLT} below, based on the results from \cite{iribarren1989}. The interested reader is also referred to \cite{cabana1987}.  \smallskip

First, we recall two important notions regarding the random fields for which the theorem applies. Recall that a random field $X = \{X(s):s\in\R^2\}$ is said to be \textit{affine} if it is equal in distribution to $\{Y(As):s\in\R^2\}$, where $Y$ is stationary, isotropic, and $A$ is a positive-definite $2\times2$ matrix. Consequentially, the resulting $X$ is stationary but may be anisotropic. Note that it is common in geostatistics literature to use the nomenclature \textit{geometric anisotropy} when referring to affine random fields \citep{chiles2009}.  \smallskip

In the case of $X$ affine, a useful expression for $\E[P_X^{T}(u)]$, when it exists, is provided in \citet[Section~1.1]{cabana1987}; that is,
\begin{equation}\label{eqn:ellipse}
\E[P_X^{T}(u)] = \frac{\mathrm{ellipse(\lambda_1,\lambda_2)}}{2\pi}\E[P_Y^{T}(u)],
\end{equation}
with $\lambda_1$ and $\lambda_1$ denoting the eigenvalues of $A$, and $\mathrm{ellipse(a,b)}$ denoting the perimeter of an ellipse with semi-minor and semi-major axes $a$ and $b$.   \smallskip

Recall that $X$ is said to be \textit{strongly mixing}, or \textit{uniformly mixing}, if there exists a function $\psi(\rho):\R^+\rightarrow\R^+$ tending to 0 as $\rho\rightarrow\infty$, such that for any two measurable sets $S_1,S_2\subset\R^2$ that satisfy $\inf\{||s_1-s_2||_2:s_1\in S_1, s_2 \in S_2\} =: \rho > 0$, and for any events $A_1$ and $A_2$ in the the sigma fields generated by $\{X(s):s\in S_1\}$ and $\{X(s):s\in S_2\}$ respectively, it holds that $|\P(A_1\cap A_2)-\P(A_1)\P(A_2)| < \psi(\rho)$.   \smallskip

Under the assumption that the underlying random field is affine and strongly mixing, we prove the  multivariate central limit theorem for our estimator. The proof of Theorem~\ref{thm:CLT} is postponed to Section~\ref{sec:proofs}.



\begin{theo}\label{thm:CLT}
Let $X$ be a stationary, affine, strongly mixing random field satisfying Assumptions~\ref{ass:basic}--\ref{ass:integrable_quantities}. With $\nabla X$ denoting the gradient of $X$, suppose that the joint density function of $(X,\nabla X)$ is bounded. Let $k\in\N^+$ and fix the vector $\mathbf{u}:=(u_1,\ldots,u_k)\in\R^k$ such that $u_i\neq u_j$ for $1\leq i<j\leq k$. Let the sequences $(m_n)_{n\geq 1}$ and $(\epsilon_n)_{n\geq 1}$ satisfy the constraints in Proposition~\ref{prp:Tn} for all $u_j$, with $j=1,\ldots,k$. Let
$$\hat{P}_X^{(2)}(\epsilon_n, m_n; T_n,\mathbf{u}):= \big(\hat{P}_X^{(2)}(\epsilon_n, m_n; T_n,u_1),\ldots,\hat{P}_X^{(2)}(\epsilon_n, m_n; T_n,u_k)\big)$$
and
$$P_X^{T_n}(\mathbf{u}):= \big(P_X^{T_n}(u_1),\ldots,P_X^{T_n}(u_k)\big).$$
Then there exists a finite, non-degenerate (\textit{i.e.}, full-rank) covariance matrix $\Sigma(\mathbf{u})$ such that
\begin{equation}\label{eqn:CLT_statement}
    \frac{\hat{P}_X^{(2)}(\epsilon_n, m_n; T_n,\mathbf{u}) - \E[P_X^{T_n}(\mathbf{u})]}{\sqrt{\nu(T_n)}} \stackrel{\d}{\longrightarrow} \mathcal{N}_k\big(\mathbf{0},\Sigma(\mathbf{u})\big),\qquad n\rightarrow\infty,
\end{equation}
with $\E[P_X^{T_n}(u_j)]$ as in \eqref{eqn:ellipse} for all $u_j$, $j=1,\ldots,k$. The elements of $\Sigma(\mathbf{u})$ are of the form
\begin{equation}\label{eqn:var_Sigma}
\Sigma_{ij}(\mathbf{u}) = \int_{\R^2} H_s(u_i,u_j)\ \d s,
\end{equation}
where
\begin{align*}
H_s(u_i,u_j) =& g_s(u_i,u_j)
\E\Big[||\nabla X(0)||_2 ||\nabla X(s)||_2\ \big|\ X(0)= u_i, X(s) = u_j\Big]\\
&- f(u_i)f(u_j)
\E\Big[||\nabla X(0)||_2\ \big|\ X(0) = u_i\Big]\E\Big[||\nabla X(s)||_2\ \big|\ X(s) = u_j\Big],
\end{align*}
with
$f$ denoting the marginal density function of $X$, and $g_s$, the joint density function of $\big(X(0),X(s)\big)$.
\end{theo}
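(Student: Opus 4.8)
The plan is to reduce the multivariate CLT for $\hat P_X^{(2)}$ to the corresponding CLT for the exact perimeter vector $P_X^{T_n}(\mathbf u)$, and then invoke the known perimeter CLT of \cite{iribarren1989}. The key observation is that Proposition~\ref{prp:Tn} already controls the discretization error coordinatewise: under the stated hypotheses on $(\epsilon_n)$ and $(m_n)$ applied to each level $u_j$, we have $\big(\hat P_X^{(2)}(\epsilon_n,m_n;T_n,u_j) - P_X^{T_n}(u_j)\big)/\sqrt{\nu(T_n)} \xrightarrow{\P} 0$ for each $j$, hence the $\R^k$-valued difference vector converges to $\mathbf 0$ in probability. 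By Slutsky's theorem (multivariate form), it therefore suffices to establish
\begin{equation*}
\frac{P_X^{T_n}(\mathbf u) - \E[P_X^{T_n}(\mathbf u)]}{\sqrt{\nu(T_n)}} \xrightarrow{\d} \mathcal N_k\big(\mathbf 0,\Sigma(\mathbf u)\big),
\end{equation*}
with $\Sigma(\mathbf u)$ having the claimed integral form, and to justify that $\E[P_X^{T_n}(u_j)]$ admits the representation \eqref{eqn:ellipse}.

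First I would record that $X$ affine means $X \stackrel{d}{=} \{Y(As)\}$ with $Y$ stationary isotropic; since $P_X^{T_n}$ depends only on the law of $X$, and a linear change of variables maps $E_X^\partial(T_n,u)$ to $E_Y^\partial(AT_n,u)$ with the arclength distorted by the anisotropic metric induced by $A$, the Cauchy–Crofton / Cabaña formula gives \eqref{eqn:ellipse}; the mean exists by Assumption~\ref{ass:integrable_quantities}. The core step is the CLT for the perimeter of a strongly mixing, stationary random field over growing rectangles. For this I would express $P_X^{T_n}(u_j)$ via the coarea (Kac–Rice-type) integral, writing $P_X^{T_n}(u_j) = \int_{T_n} \delta_{u_j}(X(s)) \|\nabla X(s)\|_2\, \d s$ in the appropriate weak sense, so that the perimeter is an additive functional of $X$ over $T_n$. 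One then applies a CLT for stationary, strongly mixing random fields on $\R^2$ (e.g. the Bolthausen–type blocking argument, or directly the result of \cite{iribarren1989}, which is tailored to perimeter-type functionals): partition $T_n$ into blocks of side growing slowly with $n$, use the mixing rate $\psi$ to decorrelate blocks, apply the Lindeberg CLT to the block sums, and control the remainder. The limiting covariance is computed from the two-point structure of the functional, yielding
\begin{equation*}
\Sigma_{ij}(\mathbf u) = \int_{\R^2} \Big( g_s(u_i,u_j)\,\E\big[\|\nabla X(0)\|_2\|\nabla X(s)\|_2 \,\big|\, X(0)=u_i,X(s)=u_j\big] - f(u_i)f(u_j)\,\E\big[\|\nabla X(0)\|_2\,\big|\,X(0)=u_i\big]\E\big[\|\nabla X(s)\|_2\,\big|\,X(s)=u_j\big]\Big)\, \d s,
\end{equation*}
which is exactly $\int H_s(u_i,u_j)\,\d s$. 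The boundedness of the joint density of $(X,\nabla X)$ is what makes the Kac–Rice representation rigorous and makes $H_s$ integrable near $s=0$ (the conditional moments stay bounded), while the mixing decay gives integrability at infinity; together with a uniform-integrability / Lyapunov bound these also guarantee $\Sigma(\mathbf u)$ is finite.

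The remaining point is non-degeneracy (full rank) of $\Sigma(\mathbf u)$. I would argue this by contradiction: if $\Sigma(\mathbf u)$ were singular, some nontrivial linear combination $\sum_j c_j P_X^{T_n}(u_j)$ would have asymptotic variance $o(\nu(T_n))$, contradicting a lower bound on the variance that follows from the strict positivity of the spectral density near zero frequency (or, more elementarily, from the fact that distinct levels $u_i \neq u_j$ produce generically disjoint level curves, so the diagonal terms dominate and the Gram structure cannot collapse); here I would use the hypothesis $u_i\neq u_j$ together with strong mixing to show the covariances $\Sigma_{ij}$, $i\neq j$, cannot make the matrix rank-deficient. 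I expect the \textbf{main obstacle} to be this non-degeneracy claim and, relatedly, making the Kac–Rice representation and its second-moment expansion fully rigorous under only $C^2$ sample paths plus the bounded-joint-density assumption — i.e. justifying the interchange of expectation and the (distributional) coarea integral, and the convergence of the truncated-block covariances to $\int_{\R^2} H_s\, \d s$; the strongly mixing hypothesis does the decorrelation work but the uniform control of the conditional gradient moments is the delicate quantitative input. Modulo these analytic checks, the structure is: Proposition~\ref{prp:Tn} $+$ Slutsky reduces to the exact perimeter, \cite{iribarren1989} (with \cite{cabana1987} for the mean) supplies the CLT and covariance, and a spectral/positivity argument gives full rank.
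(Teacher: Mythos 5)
Your proposal follows essentially the same route as the paper: Proposition~\ref{prp:Tn} plus Slutsky reduces the claim to a multivariate CLT for the exact perimeter, which is obtained from \cite{iribarren1989} (extended to several levels via Cram\'er--Wold), with the covariance formula read off from the coarea representation $P_X^T(u)=\lim_{\epsilon\to0}\tfrac{1}{2\epsilon}\int_T \I{|X(s)-u|<\epsilon}\,\|\nabla X(s)\|_2\,\d s$. The analytic points you flag (rigor of the Kac--Rice second-moment expansion and the non-degeneracy of $\Sigma(\mathbf u)$) are precisely the parts the paper delegates to the cited references rather than proving in-line, so your outline matches the published argument.
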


As seen in the proof of Theorem~\ref{thm:CLT}, the rescaled limiting Gaussian distribution of our perimeter estimator---in our pixelated framework---coincides with that of $P_X^{T_n}(u)$, the true perimeter in the continuous framework.\smallskip


%

Corollary~\ref{cor:gaussian}, stated below, provides a succinct set of conditions on $X$ that imply the result of Theorem~\ref{thm:CLT}. In particular, the additional assumption of Gaussianity of the underlying random fields is introduced.

\begin{coro}\label{cor:gaussian}
Suppose that there exists a positive-definite matrix $A$ such that the random field $X$ is equal in distribution to $\{Y(As):s\in\R^2\}$, for some $C^2$, stationary, isotropic, centered, Gaussian random field $Y$ with covariance function $r(h)$, $h\in\R^2$. Define
$$\Psi(s) = \max\Big\{ |r(s)|,\ |r_1(s)|,\ |r_2(s)|,\ |r_{11}(s)|,\ |r_{22}(s)|,\ |r_{12}(s)| \Big\},$$
for $s\in\R^2$, where $r_i := \partial r / \partial s_i$ and $r_{ij} := \partial^2 r / (\partial s_i \partial s_j)$ for $i,j\in\{1,2\}$. Suppose further that $\Psi(s)\rightarrow 0$ as $||s||_2\rightarrow\infty$, $\int_{\R^2}|\Psi(s)|\ \d s < \infty$, and $\int_{\R^2}r(s)\ \d s > 0$. Then the result of Theorem~\ref{thm:CLT} holds.
\end{coro}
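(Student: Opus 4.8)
The plan is to show that each hypothesis of Theorem~\ref{thm:CLT} is implied by the assumptions of Corollary~\ref{cor:gaussian}, keeping track of where the three analytic conditions on $r$ (namely $\Psi(s)\to0$, $\int_{\R^2}|\Psi|<\infty$, and $\int_{\R^2}r>0$) are used; the conditions on the levels and on $(m_n)_{n\ge1},(\epsilon_n)_{n\ge1}$ in Theorem~\ref{thm:CLT} are retained verbatim, and admissible sequences exist by the discussion following Proposition~\ref{prp:Tn} together with Remark~\ref{rem:positive_as}. First I would dispose of the structural hypotheses. By construction $X\stackrel{d}{=}\{Y(As):s\in\R^2\}$ with $Y$ stationary and isotropic and $A$ positive-definite, so $X$ is stationary and affine; since $Y\in C^2$ and $s\mapsto As$ is smooth, $X$ has $C^2$ sample paths (Assumption~\ref{ass:basic}) and $\nabla X(s)\stackrel{d}{=}A^{\top}(\nabla Y)(As)$. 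The vector $(X(0),\nabla X(0))$ is Gaussian, and its covariance matrix is invertible: by isotropy, $Y(0)$ and $\nabla Y(0)$ are uncorrelated, $\mathrm{Var}(Y(0))=r(0)>0$ (the trivial field $r\equiv0$ being excluded by $\int_{\R^2}r>0$), and $\mathrm{Cov}(\nabla Y(0))=-\nabla^2r(0)=\lambda_2 I_2$ with $\lambda_2=-r_{11}(0)>0$ because $Y$ is a non-degenerate $C^2$ field; the invertible map $A^{\top}$ preserves non-degeneracy. Hence $(X,\nabla X)$ admits a joint density that is Gaussian, therefore bounded, which is the boundedness hypothesis of Theorem~\ref{thm:CLT}.

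Next I would verify Assumptions~\ref{ass:weak} and~\ref{ass:integrable_quantities}. The non-degeneracy just established lets one invoke the classical almost-sure regularity of smooth Gaussian fields (see, e.g., \citep[Chapter~11]{adler2007}): almost surely $X$ has no critical points at level $u$ in $T$, the restriction of $X$ to each face of $\partial(T)$ has no local extremum at level $u$, and the third, permutation-robust condition of Assumption~\ref{ass:weak} holds; each is the statement that an appropriate Gaussian jet of $X$ avoids a fixed lower-dimensional linear set over a compact domain, which is a null event by transversality once the relevant finite-dimensional Gaussian laws are non-degenerate. For Assumption~\ref{ass:integrable_quantities}, $\E[P_X^T(u)]<\infty$ follows from a Kac--Rice / Cabaña-type surface-area formula using the bounded density of $(X,\nabla X)$, and $\E\big[\#(\Gamma_X^T(u))\big]<\infty$ follows by bounding the number of maximally connected components of the level curve by the number of critical points of $X$ in $T$ plus a boundary count along $\partial(T)$, whose expectations are finite by another application of Kac--Rice to the $C^2$ Gaussian field $X$.

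Then I would establish strong mixing: the covariance of the $\R^3$-valued stationary Gaussian field $(Y,\nabla Y)$ at lag $s$ has entries built from $r(s),r_1(s),r_2(s),r_{11}(s),r_{12}(s),r_{22}(s)$, so its operator norm is $O(\Psi(s))$; the hypothesis $\Psi(s)\to0$ (with $\int_{\R^2}|\Psi|<\infty$ supplying the needed rate) then forces this covariance to vanish at large lags, and by the Kolmogorov--Rozanov bound for Gaussian fields $(Y,\nabla Y)$ --- hence $(X,\nabla X)$ and a fortiori $X$ --- is strongly mixing. At this point all hypotheses of Theorem~\ref{thm:CLT} hold, so \eqref{eqn:CLT_statement} follows with $\Sigma(\mathbf u)$ as in \eqref{eqn:var_Sigma}; finiteness of $\Sigma_{ij}(\mathbf u)$ uses the estimate $|H_s(u_i,u_j)|=O(\Psi(s))$ as $\|s\|\to\infty$ (integrability being $\int|\Psi|<\infty$), together with the integrability in $\R^2$ of the on-diagonal singularity of $g_s(u_i,u_i)$ (of order $\|s\|^{-1}$, hence integrable near $0$) and the fact that $g_s(u_i,u_j)\to0$ as $s\to0$ for $u_i\neq u_j$.

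The remaining, and main, difficulty is the non-degeneracy of $\Sigma(\mathbf u)$, which is where $\int_{\R^2}r>0$ enters. For $a=(a_1,\dots,a_k)\neq\mathbf0$ one identifies $a^{\top}\Sigma(\mathbf u)a=\int_{\R^2}\mathrm{Cov}\big(W_a(0),W_a(s)\big)\,\d s=(2\pi)^2 f_{W_a}(0)$, where $W_a=\sum_i a_i\rho_{u_i}$ is the stationary (generalized) field obtained by superposing the Kac--Rice densities $\rho_u(s)$ of the level sets of $X$ with weights $a_i$, and $f_{W_a}$ is its spectral density. Expanding each $\rho_u$ in Wiener chaos with respect to $X$, the $q$-th chaos component of $W_a$ has spectral density at the origin proportional to $\big(\sum_i a_i c_q(u_i)\big)^2\,q!\int_{\R^2}r(s)^q\,\d s$, with $c_q$ the explicit (Hermite-type) chaos coefficients; since spectral densities of distinct chaos orders are nonnegative and add, and since $\int_{\R^2}r^q\,\d s\ge0$ for all $q\ge1$ with $\int_{\R^2}r\,\d s>0$ by hypothesis and $\int_{\R^2}r^{2}\,\d s>0$ automatically, $a^{\top}\Sigma(\mathbf u)a$ can vanish only if $\sum_i a_i c_q(u_i)=0$ for every $q$ --- impossible for $a\neq\mathbf0$ at $k$ distinct levels by a totality/Vandermonde-type argument for the analytic functions $c_q$. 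Thus $\Sigma(\mathbf u)$ is full rank. I expect this last step --- pinning down the chaos coefficients $c_q$ and ruling out their simultaneous linear dependence across the $k$ distinct levels --- to be the principal obstacle, the earlier points being standard facts about smooth, non-degenerate, stationary Gaussian fields.
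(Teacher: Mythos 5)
Your overall strategy differs from the paper's in a way that matters. The paper does \emph{not} verify the hypotheses of Theorem~\ref{thm:CLT} (in particular it never establishes strong mixing); it only verifies Assumptions~\ref{ass:basic}--\ref{ass:integrable_quantities} (via Theorem~11.3.3 and Theorem~13.2.1 of \cite{adler2007} and \cite{beliaev2020}), deduces Proposition~\ref{prp:Tn}, and then obtains the \emph{conclusion} of Theorem~\ref{thm:CLT} by combining Proposition~\ref{prp:Tn} with the Central Limit Theorem of \citet[Theorem~4.7]{berzin2021}, whose hypotheses are exactly the three analytic conditions on $\Psi$ and $\int_{\R^2} r$. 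Your verification of Assumptions~\ref{ass:basic}--\ref{ass:integrable_quantities} and of the bounded joint density of $(X,\nabla X)$ is sound and matches the paper's in substance. The divergence, and the trouble, is that you instead try to check all the hypotheses of Theorem~\ref{thm:CLT} directly and then re-derive the non-degeneracy of $\Sigma(\mathbf u)$ yourself.

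Two concrete gaps result. First, your strong-mixing step is not valid: for stationary Gaussian fields, decay of the covariance (even with $\Psi\rightarrow 0$ and $\int_{\R^2}|\Psi|\,\d s<\infty$) does \emph{not} imply strong mixing. The Kolmogorov--Rozanov comparison $\alpha\leq\rho\leq 2\pi\alpha$ reduces $\alpha$-mixing to $\rho$-mixing, but $\rho$-mixing is a condition on the spectral density (roughly, that it be bounded away from zero, or factor appropriately), not on the rate at which $r(s)$ vanishes at infinity; here the hypotheses only give a continuous, bounded spectral density with $f(0)>0$, which is insufficient. So the chain ``$\Psi\rightarrow0\Rightarrow$ strongly mixing $\Rightarrow$ Theorem~\ref{thm:CLT} applies'' breaks at its first link. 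Second, the non-degeneracy of $\Sigma(\mathbf u)$ --- which you correctly identify as the crux and which is precisely where $\int_{\R^2}r(s)\,\d s>0$ enters --- is left unproved, and the sketch you give is not accurate in detail: the chaos expansion of the length functional $\delta_u(X(s))\,\|\nabla X(s)\|_2$ is taken with respect to the three-dimensional Gaussian vector $(X,\nabla X)$, so the covariance of the $q$-th chaos is a sum over multi-indices involving products of $r$, $r_i$, and $r_{ij}$ at lag $s$, not a single term proportional to $\bigl(\sum_i a_i c_q(u_i)\bigr)^2\,q!\int_{\R^2}r(s)^q\,\d s$. Carrying this out rigorously is exactly the content of \citet[Theorem~4.7]{berzin2021} (see also the Kratz--Le\'on line of work cited in the paper), and the intended proof is to cite it rather than reconstruct it. If you repair your argument by replacing the mixing step and the chaos computation with that citation, you recover the paper's proof.
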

The proof can be found  in  Section~\ref{sec:proofs}. We remark that a vast literature exists on the asymptotic distribution of level functionals of Gaussian random fields \citep{wschebor1985, meschenmoser2013, shashkin2013,  dibernardino2017, beliaev2020, dibernardino2020}, in which case, the asymptotic variance-covariance matrix in \eqref{eqn:var_Sigma} can be written by projecting the Gaussian functionals of interest
onto the It\^{o}-Wiener chaos \citep[the interested reader is referred, for instance, to][]{kratz2001, EL16, mueller2017,kratz2018,berzin2021}.

\section{Simulation studies}\label{sec:sims}
In this section, we illustrate finite sample performances of our estimator $\hat P_X^{(2)}(\epsilon, m; T, \textbf{u})$ on simulated data. More precisely, we wish to showcase the results of Proposition~\ref{prp:L1} and Theorem~\ref{thm:CLT}. Furthermore, we aim to compare the estimators constructed from the norms $p=1$ and $p=2$ in~\eqref{eqn:phat}. Our simulation studies are implemented both for anisotropic (see Section~\ref{sec:aniso}) and isotropic (see Section~\ref{sec:iso}) random fields. In addition, we provide an adaptive method for choosing the hyperparameter $m$ for the estimator $\hat P_X^{(2)}(\epsilon, m; T, u)$ (see Section~\ref{sec:hyperparam}). The random fields used in each simulation are elements of the class in Example~\ref{exa:general_X} below.

\begin{exam}\label{exa:general_X}
Let $Y$ be a stationary, isotropic, centered, Gaussian random field with a Mat\'ern covariance function
$$r(h):=\frac{2^{1-\nu}}{\Gamma(\nu)}\big(\sqrt{2\nu}||h||_2\big)^{\nu}K_\nu(\sqrt{2\nu}||h||_2),\qquad h\in\R^2,$$
where $K_\nu$ is the modified Bessel function of the second kind and $\nu=2.5$. To clarify, the range parameter in the covariance function is fixed as 1.

Let $\{X(s;\sigma_1,\sigma_2,\theta):s\in\R^2\}$ be a random field equal in distribution to $\{Y(A s):s\in\R^2\}$, where
\begin{equation}\label{eqn:A_theta}
A := \begin{bmatrix}
\sigma_1 & 0\\
0\ & \sigma_2
\end{bmatrix}
\begin{bmatrix}
\cos\theta & \sin\theta \\
-\sin\theta\ & \cos\theta
\end{bmatrix},
\end{equation}
$\sigma_1, \sigma_2 \in \R^+$, $\sigma_1 \geq \sigma_2$, and $\theta\in[0,\pi)$.
In this way, $X(\cdot;\sigma_1,\sigma_2,\theta)$ is affine with affinity parameters $k = (1-\sigma_2^2/\sigma_1^2)^{1/2}$ and $\theta$ \citep{cabana1987}. Notice that $X(\cdot;\sigma_1,\sigma_2,\theta)$ is also Gaussian with covariance function given by $r_X(h)= r(A h)$. Although $A$ is not necessarily positive-definite, there exists a unique positive-definite matrix $B$ with eigenvalues $\sigma_1$ and $\sigma_2$ such that $||A h||_2 = ||B h||_2$ for all $h\in\R^2$. Note also that $\sigma_1=\sigma_2$ if and only if $X$ is isotropic, in which case, $X$ does not depend on $\theta$.
\end{exam}
Throughout Section~\ref{sec:sims}, $X(\cdot;\sigma_1,\sigma_2,\theta)$ and $Y$ denote the random fields in Example~\ref{exa:general_X}. The former is sometimes abbreviated as $X$, and the dependence on $\sigma_1$, $\sigma_2$, and $\theta$ should be understood implicitly. The results in this section can be reproduced using the code made available at \url{https://github.com/RyanCotsakis/excursion-sets}.

\subsection{A proxy for the true perimeter}\label{sec:proxy}

In what follows, the R package \texttt{RandomFields} is used to generate realizations of random fields on regular grids. However, when simulating the random field $X(\cdot;\sigma_1,\sigma_2,\theta)$ in this way, it is impossible to infer the exact value of $P_X^T(u)$ for any level $u\in\R$ due to the discretization of the domain $T$.
To overcome this issue, a proxy is used for the true perimeter.
In Appendix B of \cite{bierme2021}, the authors introduce an estimator that they show to be multigrid convergent for ${P}_{X}^T(u)$, for any $u\in\R$.
Moreover, the estimator takes as its arguments the values of $X$, a random field with $C^2$ sample paths, evaluated on a regular grid, \emph{i.e.}, $X(s_{i,j})$ for $i,j\in I^{(T,\epsilon)}$---precisely the output of the simulation from the \texttt{RandomFields} package. For a pixel width of $\epsilon$, denote this estimator by $\tilde{P}_{X}(\epsilon;T,u)$. Notice that $\tilde{P}_{X}(\epsilon; T,u)$ requires more information than $\hat P_X^{(2)}(\epsilon, m; T, {u})$.
While $\tilde{P}_{X}$ has access to the value of $X$ evaluated on the regular square tiling $\mathcal{G}^{(T,\epsilon)}$, defined in~\eqref{eqn:grid_def}, $\hat P_X^{(2)}$ only has access to the binary black-and-white matrix $\zeta_{X}^{(T,\epsilon)}(u)$, defined in~\eqref{eqn:zeta_def}.
\smallskip

Convergence of $\tilde{P}_{X}(\epsilon_n;T,u)$ to ${P}_{X}^T(u)$ in $L^1(\Omega)$ follows from the same arguments that we use in the proof of our Proposition~\ref{prp:L1}.
Therefore, for any sequence $(h_n)_{n\geq 1}$,
\begin{equation}\label{eqn:L1_equiv}
\big|h_n - \tilde{P}_{X}(\epsilon_n;T,u)\big| \stackrel{L^1}{\longrightarrow} 0
\iff
h_n \stackrel{L^1}{\longrightarrow} {P}_{X}^T(u)
\end{equation}
as $n\rightarrow\infty$.

\subsection{The anisotropic case}\label{sec:aniso}

None of the assumptions established thus far prohibit anisotropy. In fact, all of the results developed in Section~\ref{sec:main_results} are applicable to all of the random fields parameterized as in Example~\ref{exa:general_X}. In Sections~\ref{sec:aniso_angle},~\ref{sec:aniso_convergence}, and~\ref{sec:aniso_clt}, we consider such random fields that are anisotropic (\textit{i.e.}, parametrized by $\sigma_1\neq\sigma_2$). To avoid confusion, we consistently choose $(\sigma_1,\sigma_2) = (2,0.5)$.


\subsubsection{Mean perimeter estimate as a function of the angle $\theta$}\label{sec:aniso_angle}

The random fields
$X$
in Example~\ref{exa:general_X} parametrized by $(\sigma_1,\sigma_2) = (2,0.5)$ and several $\theta\in[0,\pi / 2]$ are simulated in the domain $T=[-2.5,2.5]^2$, discretized into $256\times 256$ pixels. With $\epsilon$ denoting the resulting pixel width, the performances of the estimators $(\pi / 4) \hat P_{X}^{(1)}(\epsilon;T,u)$ and $\hat P_{X}^{(2)}(\epsilon, m;T,u)$ with $m=11$ are compared at the level $u=0.5$.
For each of the several values of $\theta$ chosen in $[0,\pi / 2]$, 200 independent replications of $X(\cdot;2,0.5,\theta)$ are simulated in the domain $T$ and the mean error in the estimates of $P_{X}^T(0.5)$ is plotted for each of the two estimators: the sample means of $(\pi / 4)\hat P_{X}^{(1)}(\epsilon;T,0.5) - \tilde{P}_{X}(\epsilon;T,0.5)$ (shown in green) and $\hat P_{X}^{(2)}(\epsilon, 11;T,0.5)$ (shown in blue), and $\tilde{P}_{X}(\epsilon;T,0.5) - \tilde{P}_{X}(\epsilon;T,0.5)$ (shown in black) in Figure~\ref{fig:aniso} (c). Notice that
$\E\big[(\pi / 4)\hat P_{X}^{(1)}(\epsilon;T,0.5)\big]$ depends on $\theta$, since $\E\big[P_{X}^T(0.5)\big] = 19.4$ for all $\theta$. The latter expectation is computed via equation~\eqref{eqn:ellipse} and the Gaussian Kinematic Formula in \citet[Theorem~15.9.5]{adler2007}.
The sample average of $\hat P_{X}^{(2)}(\epsilon, 11;T,0.5) - \tilde{P}_{X}(\epsilon;T,0.5)$ shown in Figure~\ref{fig:aniso} is nearly 0 for all $\theta$, thus supporting our claim that that our estimator adapts to anisotropic random fields.

\begin{figure}
\centering
\includegraphics[width=0.9\linewidth]{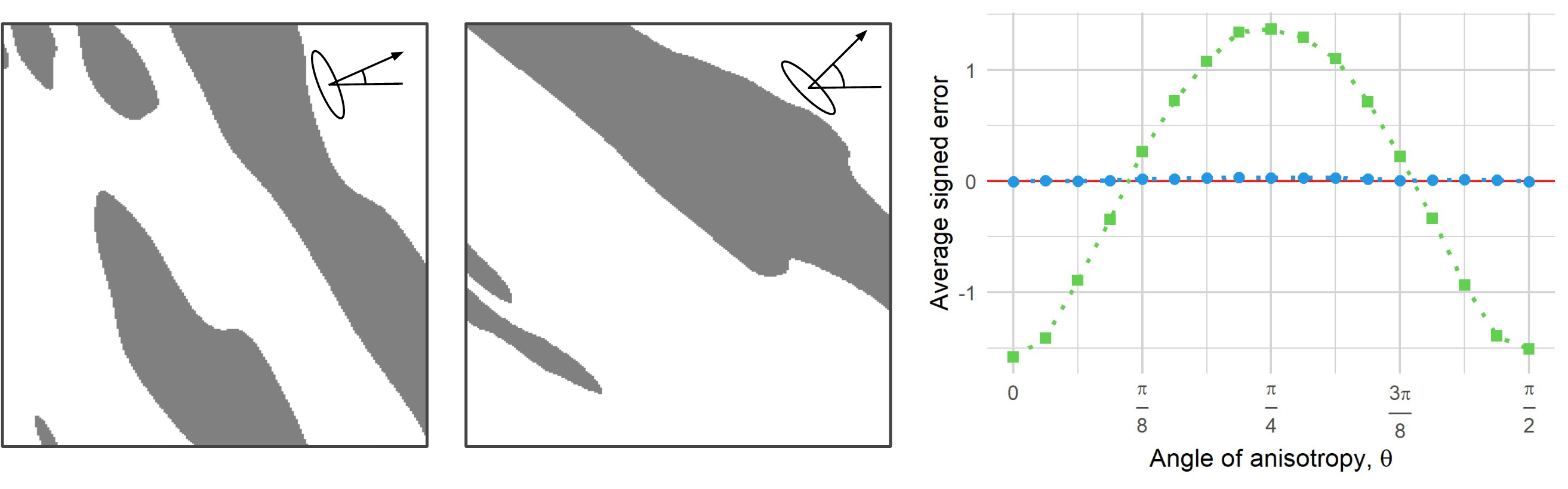}
\put(-328,-8){(a) $\theta = \pi / 8$}
\put(-228,-8){(b) $\theta = \pi / 4$}
\put(-70,-8){(c)}
\put(-271,96){$\theta$}
\put(-164,98){$\theta$}
\caption{Illustration of the effect of anisotropy on the perimeter length estimation. The anisotropic random field ${X(\cdot;2,0.5,\theta)}$ is described in Example~\ref{exa:general_X}. Here, $T=[-2.5,2.5]^2$ and $\epsilon=5/255$. Panels (a, b): a realization of $E_{X}(T,0.5)$ shown as the dark region for the corresponding value of $\theta$. The matrix $A$, defined in~\eqref{eqn:A_theta}, maps the drawn ellipse to a circle. Panel (c): for several $\theta\in[0,\pi / 2]$, 200 independent realizations of ${X}$ are simulated, and the mean values of $(\pi / 4)\hat P_{X}^{(1)}(\epsilon;T,0.5)- \tilde{P}_{X}(\epsilon;T,0.5)$ (green squares) and $\hat P_{X}^{(2)}(\epsilon, 11;T,0.5)- \tilde{P}_{X}(\epsilon;T,0.5)$ (blue circles) are plotted.}
\label{fig:aniso}
\end{figure}
%

\subsubsection{Convergence in mean in the anisotropic case}\label{sec:aniso_convergence}

Let $\lfloor \cdot \rfloor$ denote the floor function. For $n\in\N^+$, fix the domain $T = [-2.5,2.5]^2$ and let
\begin{equation}\label{eqn:sequences_eps_m}
M_n = \lfloor 10 n^{3/2} \rfloor,\qquad m_n=n,\qquad \mathrm{and}\qquad\epsilon_n = 5/(M_n-1),
\end{equation}
so that the constraints in Theorem~\ref{thm:consistent_Phat} and Proposition~\ref{prp:L1} are satisfied. Let $X(\cdot;2,0.5,0)$ be the random field in Example~\ref{exa:general_X} associated to $(\sigma_1,\sigma_2,\theta)=(2,0.5,0)$. As noted in Remark \ref{rem:pixels}, the quantity $M_n$ should be interpreted as the pixel density of the discretized domain $T$, and $\epsilon_n$ should be understood as the corresponding pixel width. Figure~\ref{fig:m23} provides two illustrations of $E_X(u)$, with $u=0.5$, in the domain $T$; one containing $M_2\times M_2$ pixels, and another containing of $M_3\times M_3$ pixels. In this   study, $\E[P_{X}^T(0.5)] = 21.3$ (computed via equation~\eqref{eqn:ellipse} and the Gaussian Kinematic Formula in \citet[Theorem~15.9.5]{adler2007}). \smallskip

To illustrate the convergence of $\hat P_X^{(2)}(\epsilon_n, m_n; T, 0.5)$ to ${P}_{X}^T(0.5)$ in $L^1(\Omega)$, the left-hand side of equation~\eqref{eqn:L1_equiv} is shown numerically with $h_n = \hat P_X^{(2)}(\epsilon_n, m_n; T, 0.5)$.
Figure~\ref{fig:mae_aniso} shows how the mean absolute error (MAE) of the approximation of $\tilde{P}_{X}(\epsilon_n;T,0.5)$ (the proxy for $P_X^T(0.5)$; see Section~\ref{sec:proxy}) by the estimator $\hat P_X^{(2)}(\epsilon_n, m_n; T, 0.5)$ (shown in blue) approaches 0 as $n\rightarrow\infty$. There is no convergence result for the estimator  $(\pi / 4)\hat P_X^{(1)}(\epsilon_n; T, 0.5)$ (shown in green) since it is not well-suited for anisotropic random fields.

\begin{figure}
    \centering
    \includegraphics[width=\linewidth]{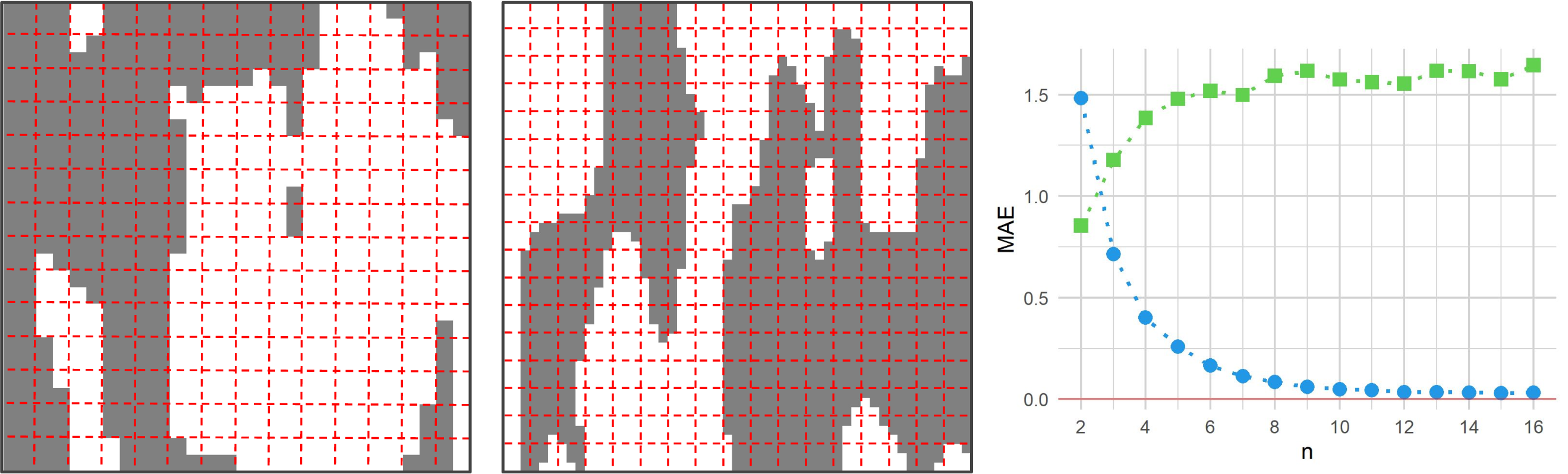}
    \put(-355,-12){(a) $n = 2$}
    \put(-230,-12){(b) $n = 3$}
    \put(-72,-12){(c)}
    \caption{The case of decreasing pixel width with the domain $T=[-2.5,2.5]^2$ fixed. Here, $u=0.5$; $M_n$, $m_n$, and $\epsilon_n$ are given in~\eqref{eqn:sequences_eps_m}; and $X$ in Example~\ref{exa:general_X}, parametrized by $(\sigma_1,\sigma_2,\theta) = (2,0.5,0)$, is anisotropic. Panel (a): the excursion set (shown as the dark region) is generated using $M_2\times M_2$ pixels, and the dashed red lines have a spacing of $2\epsilon_2$, where $\epsilon_2$ is the pixel width. Panel (b): the size of the image (measured in pixels) is $M_3\times M_3$, and the dashed red lines have a spacing of $3\epsilon_3$, where $\epsilon_3$ is the pixel width. Panel (c): the approximation of $\tilde{P}_{X}(\epsilon_n;T,0.5)$ by $(\pi / 4)\hat P_X^{(1)}(\epsilon_n; T, 0.5)$ (green squares) and by $\hat P_X^{(2)}(\epsilon_n, m_n; T,0.5)$ (blue circles) is shown for different values of $n$. For each $n$, the MAE of the approximations are calculated from 500 independent replications of the process $X$. }
    \label{fig:mae_aniso}
\end{figure}

\subsubsection{Asymptotic normality in the anisotropic case}\label{sec:aniso_clt}

To illustrate the  Central Limit Theorem  for multiple levels (see Theorem~\ref{thm:CLT}), we compute $\hat P_X^{(2)}(\epsilon, m; T, \textbf{u})$ in a large domain $T = [-15,15]^2$ divided into $1024\times 1024$ pixels, with $m=7$, $\textbf{u} = (0,0.5,1)$, and $X$ as in Example~\ref{exa:general_X} with $(\sigma_1,\sigma_2) = (2,0.5)$ and $\theta=\pi / 4$. Figure~\ref{fig:qqplots_aniso} shows how the distribution of the random vector $\hat P_X^{(2)}(\epsilon, m; T, \textbf{u})$
is close to a 3-variate normal distribution with mean $\E[P_X^{T}(\textbf{u})] = (793, 700, 481)$ (computed via equation~\eqref{eqn:ellipse}).\smallskip

\begin{figure}
    \centering
    \includegraphics[width=0.8\linewidth]{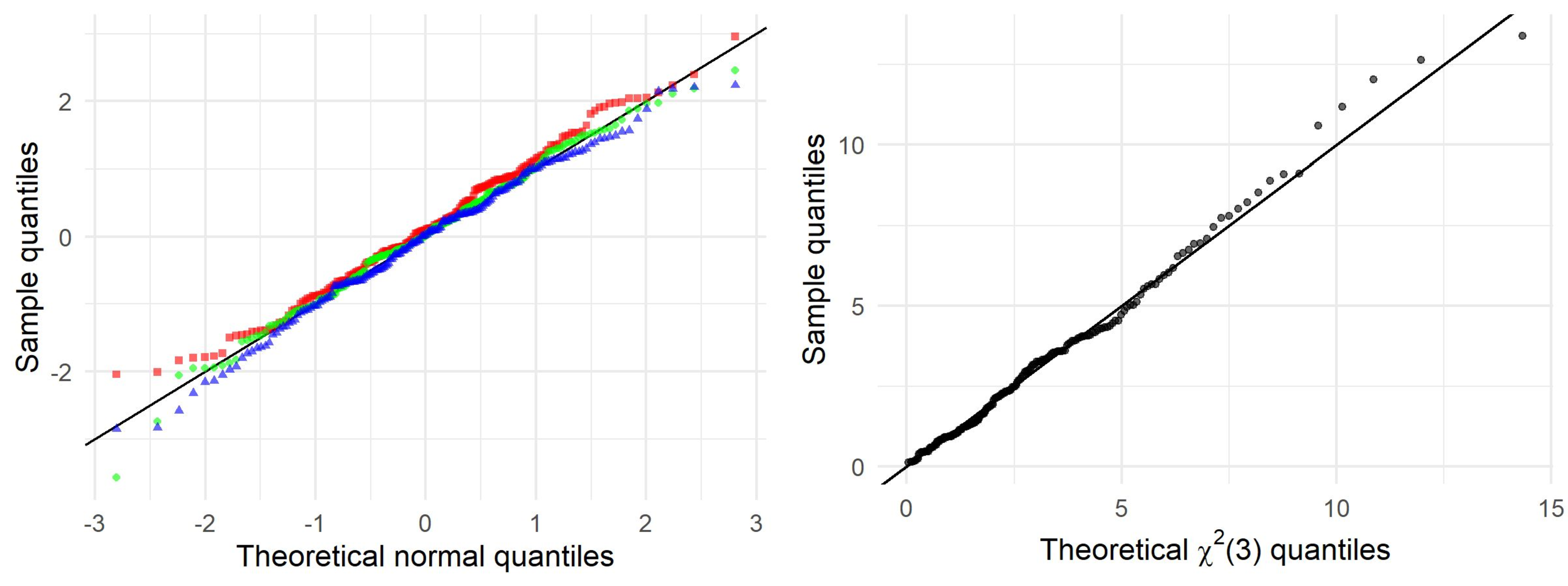}
    \put(-242,-9){(a)}
    \put(-79,-9){(b)}
    \caption{
An illustration of the asymptotic normality of our estimator for the anisotropic random field $X(\cdot;2,0.5,\pi / 4)$ in Example~\ref{exa:general_X}. We simulated 200 independent replications of the vector $\hat P_{X}^{(2)}(\epsilon, m; T, \textbf{u})$ with $\textbf{u} = (0,0.5,1)$, $T=[-15,15]^2$, $m=7$, $\epsilon = 30/1023$.
    Panel (a): the margins of $\hat P_X^{(2)}(\epsilon, m; T, \textbf{u}) - \E[P_X^{T}(\textbf{u})]$, rescaled using the sample variances, plotted on a normal qq-plot. Panel  (b): the squared Mahalanobis distance of $\hat P_X^{(2)}(\epsilon, m; T, \textbf{u})$ to $\E[P_X^{T}(\textbf{u})]$, calculated via the sample covariance matrix of $\hat P_X^{(2)}(\epsilon, m; T, \textbf{u})$, plotted against the quantiles of a $\chi^2(3)$ random variable with 3 degrees of freedom.}
    \label{fig:qqplots_aniso}
\end{figure}

For each component $u_i$ of $\textbf{u}$, we test the null hypothesis that $\hat P_X^{(2)}(\epsilon, m; T, u_i)$ follows a Gaussian distribution using the Shapiro-Wilk test. The resulting $p$-values from the tests are 0.39, 0.49, and 0.31, respectively. Thus, the hypothesis of Gaussianity cannot be rejected at a significant level for any margin of $\hat P_X^{(2)}(\epsilon, m; T, \textbf{u})$.
Using the R package \texttt{mvnormtest} \citep{mvnormtest}, we test the null hypothesis that $\hat P_X^{(2)}(\epsilon, m; T, \textbf{u})$ follows a multivariate normal distribution with a multivariate Shapiro-Wilk test. The test statistic corresponds to a $p$-value of 0.14, hence, multivariate normality cannot be rejected at a significant level.

\subsection{The isotropic case}\label{sec:iso}
In what follows, $Y$ denotes the isotropic random field in Example~\ref{exa:general_X}. This isotropic case allows for a fair comparison between the estimators $(\pi / 4)\hat P_{Y}^{(1)}(\epsilon;T, u)$ and $\hat P_Y^{(2)}(\epsilon, m; T, u)$.

\subsubsection{Convergence in mean in the isotropic case}\label{sec:iso_convergence}

The experiment in Section~\ref{sec:aniso_convergence} is repeated for the isotropic random field $Y$. Figure~\ref{fig:m23} summarizes the new results. The MAE of the approximation of $\tilde{P}_{Y}(\epsilon_n;T,0.5)$ by $\hat P_Y^{(1)}(\epsilon_n; T, 0.5)$ (shown in green) tends to a positive value, so by~\eqref{eqn:L1_equiv}, $(\pi / 4)\hat P_Y^{(1)}(\epsilon_n; T, 0.5)$ does not converge to ${P}_{Y}^T(0.5)$ in $L^1(\Omega)$, even though $\E\big[(\pi / 4)\hat P_Y^{(1)}(\epsilon_n; T, 0.5)\big]\rightarrow \E\big[{P}_{Y}^T(0.5)\big]$ as $n\rightarrow\infty$ (see equation~\eqref{eqn:bierme_converge}). The interested reader is referred to Theorem 3 in \cite{bierme2021}. For reference, $\E[P_Y^T(0.5)]= 15.6$ (computed via the Gaussian Kinematic Formula in \citet[Theorem~15.9.5]{adler2007}).

\begin{figure}
    \centering
    \includegraphics[width=\linewidth]{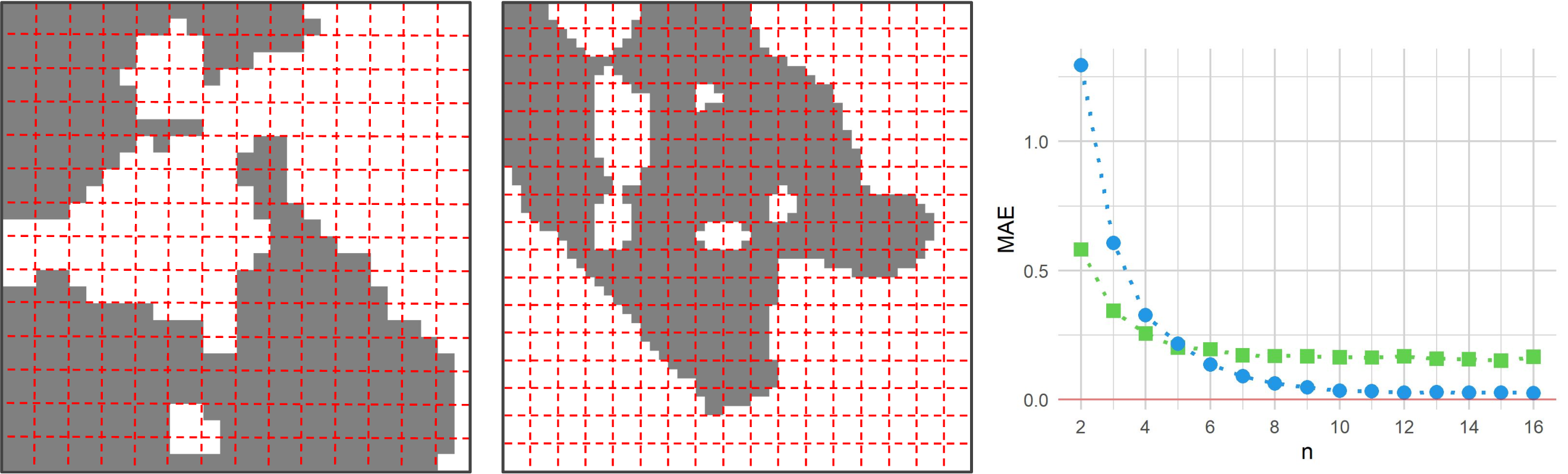}
    \put(-355,-12){(a) $n = 2$}
    \put(-230,-12){(b) $n = 3$}
    \put(-72,-12){(c)}
    \caption{The case of decreasing pixel width and fixed domain $T=[-2.5,2.5]^2$, where $u=0.5$; $M_n$, $m_n$, and $\epsilon_n$ are given in~\eqref{eqn:sequences_eps_m}; and $Y$ is the isotropic random field in Example~\ref{exa:general_X}. See the caption of Figure~\ref{fig:mae_aniso} for a more detailed description of each panel.}
    \label{fig:m23}
\end{figure}

\subsubsection{Asymptotic normality in the isotropic case}\label{sec:iso_clt}

We repeat the experiment in Section~\ref{sec:aniso_clt}, which tests the asymptotic normality of our estimator, but now with $Y$ as the underlying random field. The $p$-values corresponding to the Gaussianity tests for the levels $u=0$, 0.5, and 1 are 0.80, 0.68, and 0.43, respectively. For the multivariate normality test, the resulting $p$-value is 0.37. The same diagnostic plots in Section \ref{sec:aniso_clt} are provided in Figure~\ref{fig:qqplots} for this isotropic case.

\begin{figure}
    \centering
    \includegraphics[width=0.8\linewidth]{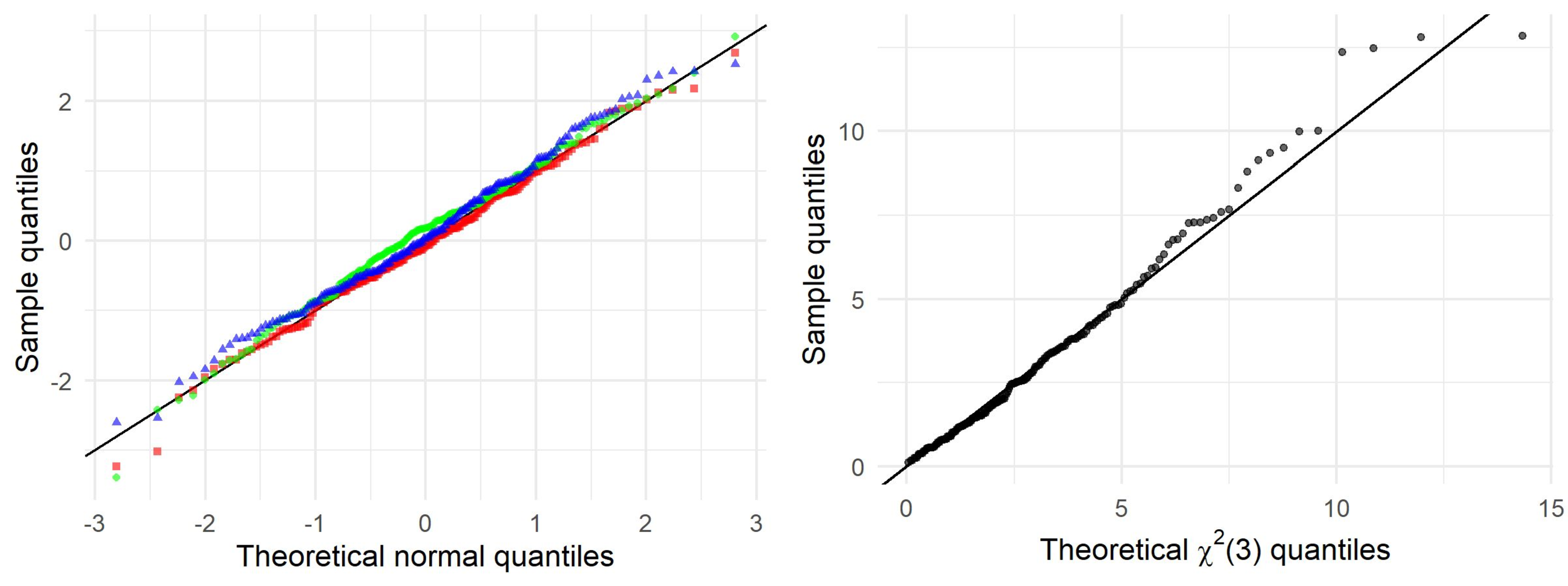}
    \put(-242,-9){(a)}
    \put(-79,-9){(b)}
    \caption{An illustration of the asymptotic normality of our estimator when considering the isotropic random field $Y$ in Example~\ref{exa:general_X}. We simulated 200 independent replications of the vector $\hat P_Y^{(2)}(\epsilon, m; T, \textbf{u})$ with $\textbf{u} = (0,0.5,1)$, $T=[-15,15]^2$, $m=7$, $\epsilon = 30/1023$. See the caption of Figure~\ref{fig:qqplots_aniso} for a description of each panel.}
    \label{fig:qqplots}
\end{figure}

\subsection{Hyperparameter selection}\label{sec:hyperparam}
In practice, sampling locations often have a fixed spacing, and it is not possible to further decrease the grid spacing in the discretization. In these cases, the pixel width $\epsilon$ is a feature of the data. So, to use $\hat P_X^{(2)}(\epsilon, m; T,u)$ (for an arbitrary model $X$), the hyperparameter $m$ must be chosen appropriately. As a rule-of-thumb, empirical studies suggest that it is reasonable to choose
\begin{equation}\label{eqn:mXT}
m = m_X^T := \big\lfloor C\epsilon^{-2/3}\big\rfloor,
\end{equation}
with
\begin{equation*}
C := \frac 13\bigg(\frac{\nu(T)}{N_{cc} + N_{holes}}\bigg)^{1/3},
\end{equation*}
where $N_{cc}$ (\emph{resp.} $N_{holes}$) corresponds to the number of connected components (\emph{resp.} holes) of $E_X(T,u)$. For a sequence $(\epsilon_n)_{n\geq 1}$ tending to 0, the corresponding sequence $(m_n)_{n\geq 1}$ determined by~\eqref{eqn:mXT} satisfies the asymptotic relationship required by Theorem~\ref{thm:consistent_Phat}.\smallskip

In practice, the quantities $N_{cc}$ and $N_{holes}$ can be estimated by considering the sites in $\mathcal{G}^{(T,\epsilon)}$ to be either 4-connected or 8-connected, and colouring each site based on its corresponding value in~$\zeta_X^{(T,\epsilon)}(u)$.\smallskip

Figures~\ref{fig:m_005} and~\ref{fig:m_02_60} showcase the performance of $\hat P_Y^{(2)}(\epsilon, m_Y^T; T,0)$, with $m_Y^T$ as in~\eqref{eqn:mXT}, for two different levels of discretization of the isotropic random field $Y$ in Example~\ref{exa:general_X}.

\begin{figure}
    \centering
    \includegraphics[width=\linewidth]{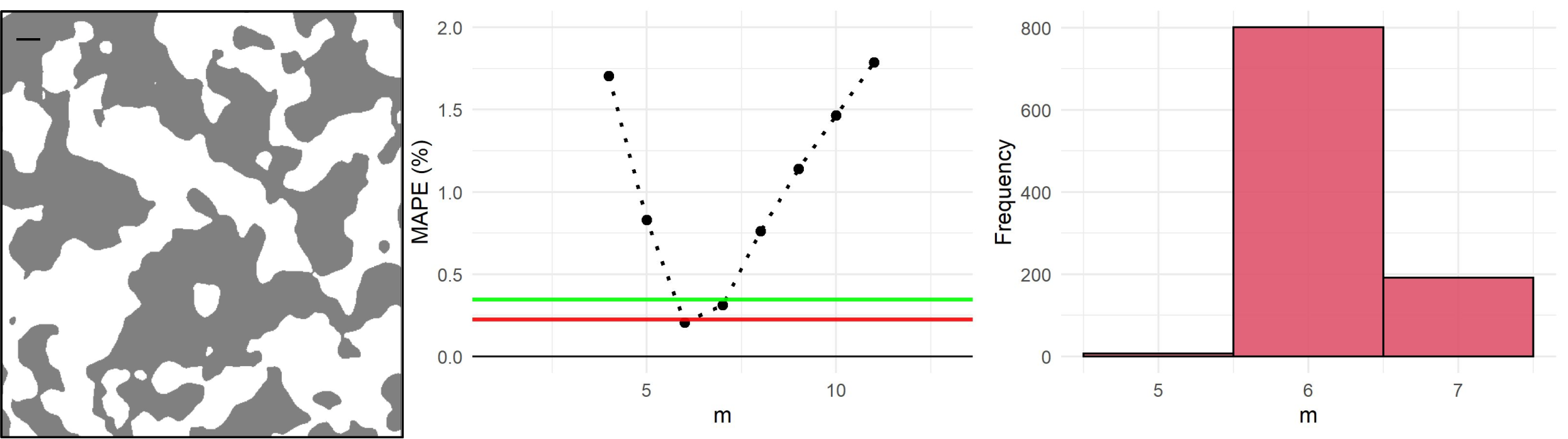}
    \put(-360,-11){(a)}
    \put(-224,-11){(b)}
    \put(-73,-11){(c)}
    \caption{Illustration of the influence of the hyperparameter $m$. The mean absolute percentage error (MAPE) of several perimeter estimators is calculated for 1000 independent replications of the stationary, isotropic, Gaussian random field $Y$ in Example~\ref{exa:general_X}, with $T=[-10,10]^2$, $u=0$, and $\epsilon = 20/511$. The proxy $\tilde{P}_{Y}(\epsilon;T,0)$ is used to represent the true perimeter $P_Y^T(0)$ for each sample path (see Section~\ref{sec:proxy}). Panel  (a): one particular realization of $E_Y(0)$ is depicted in $T$. Shown for scale in the top-left of the image is a line segment with length $30\epsilon$. Panel  (b): the points plotted in black correspond to the MAPE of $\hat P_Y^{(2)}(\epsilon, m; T,0)$ for various values of $m$. The green horizontal line (0.35\%) corresponds to the MAPE of $(\pi / 4)\hat P_Y^{(1)}(\epsilon; T,0)$, which obviously does not depend on $m$. The red horizontal line (0.22\%) corresponds to the MAPE of $\hat P_Y^{(2)}(\epsilon, m_Y^T; T,0)$, with $m_Y^T$ as in~\eqref{eqn:mXT}. Panel (c): the values of $m_Y^T$ computed from the 1000 independent replications of $Y$.}
    \label{fig:m_005}
\end{figure}

\begin{figure}
    \centering
    \includegraphics[width=\linewidth]{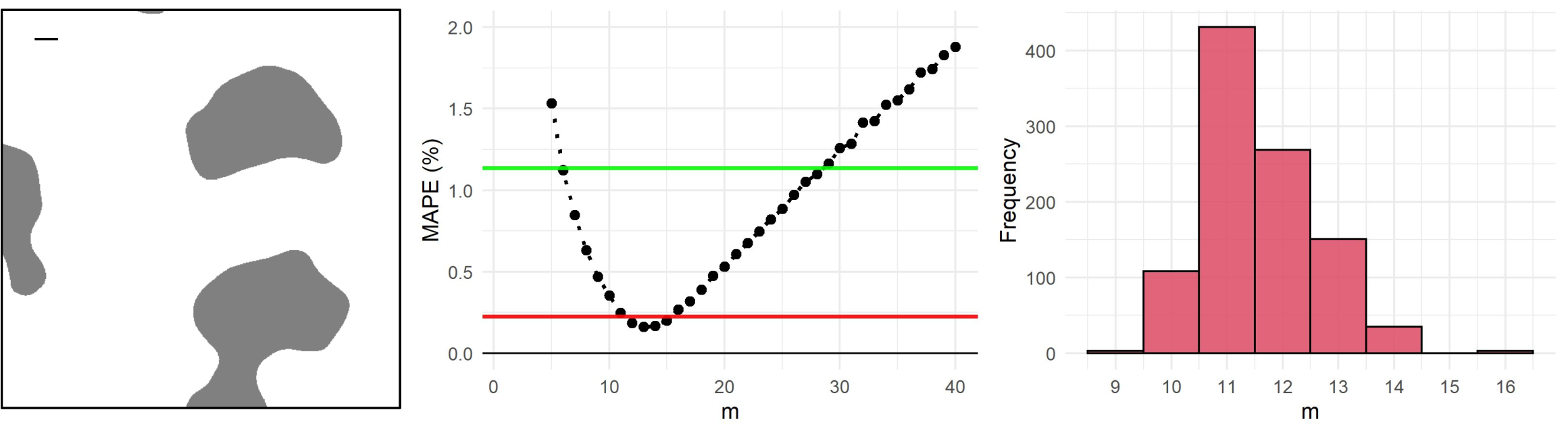}
    \put(-360,-11){(a)}
    \put(-224,-11){(b)}
    \put(-73,-11){(c)}
    \caption{See the caption of Figure~\ref{fig:m_005} for a description of each panel. In this case, $T = [-2.5,2.5]^2$ and $\epsilon = 5/511$. The MAPE of $(\pi / 4)\hat P_Y^{(1)}(\epsilon; T,0)$ is 1.13\%, and that of $\hat P_Y^{(2)}(\epsilon, m_Y^T; T,0)$ is 0.22\%.}
    \label{fig:m_02_60}
\end{figure}

\subsection{Behaviour of the perimeter estimator as a function of the level $u$}
Differently from our previous numerical studies, we illustrate the behaviour of $\hat P^{(2)}_Y(\epsilon,m_Y^T; T,u)$ as a function of the level $u$ in Figure~\ref{fig:levels_iso}, where $Y$ is the isotropic random field in Example~\ref{exa:general_X}. The same is done for an anisotropic field $X$ in Figure~\ref{fig:levels_aniso}.

\begin{figure}
    \centering
    \includegraphics[width=0.75\linewidth]{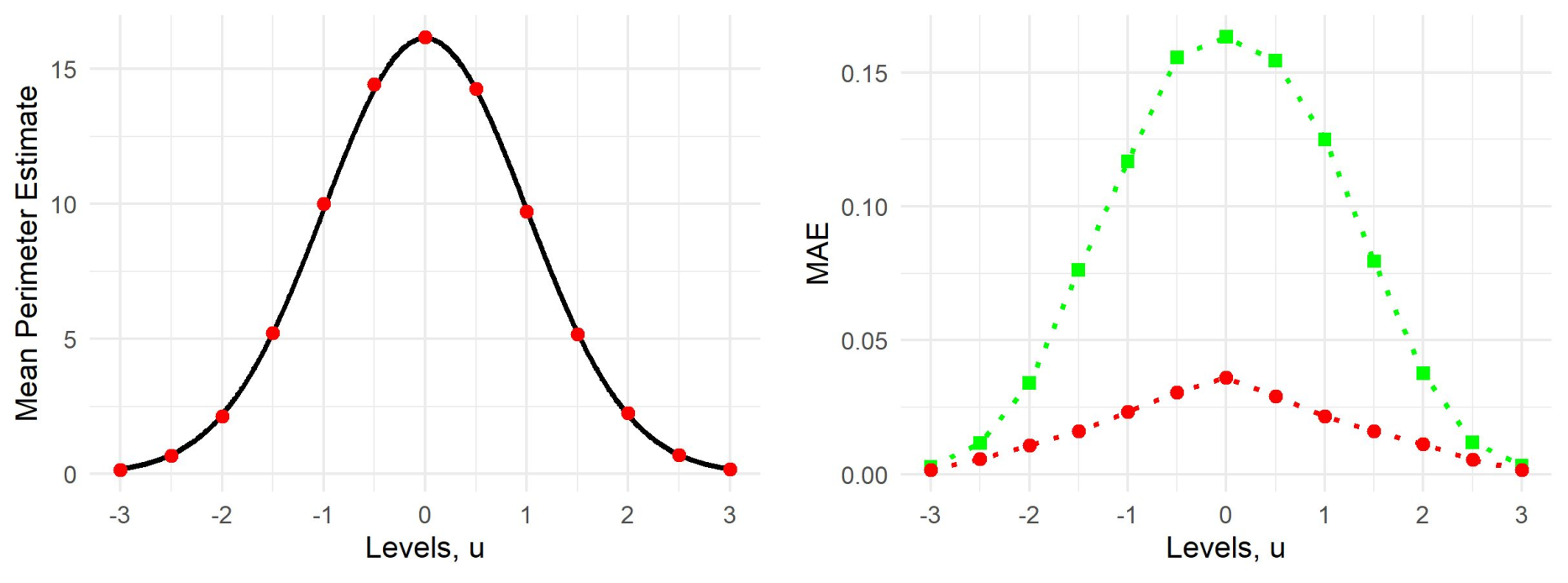}
    \put(-226,-12){(a)}
    \put(-72,-12){(b)}
    \caption{Illustration of perimeter estimation for several levels $u$. The stationary, isotropic, Gaussian random field $Y$ in Example~\ref{exa:general_X} is considered on $T=[-2.5,2.5]^2$ with a discretization of $\epsilon = 5/511$. Panel (a): the sample mean of 500 independent replications of $\hat P^{(2)}_Y(\epsilon,m_Y^T;T,u)$ plotted in red for several values of $u$, shown against $\E[P_Y^T(u)]$ in black (computed via the Gaussian Kinematic Formula in \citet[Theorem~15.9.5]{adler2007}). Panel (b): the MAE of the approximation of $\tilde{P}_{Y}(\epsilon;T,u)$ by $\hat P^{(2)}_Y(\epsilon,m_Y^T;T,u)$ (red circles) and $(\pi / 4)\hat P^{(1)}_Y(\epsilon,;T,u)$ (green squares).
}
    \label{fig:levels_iso}
\end{figure}

\begin{figure}
    \centering
    \includegraphics[width=1.03\linewidth]{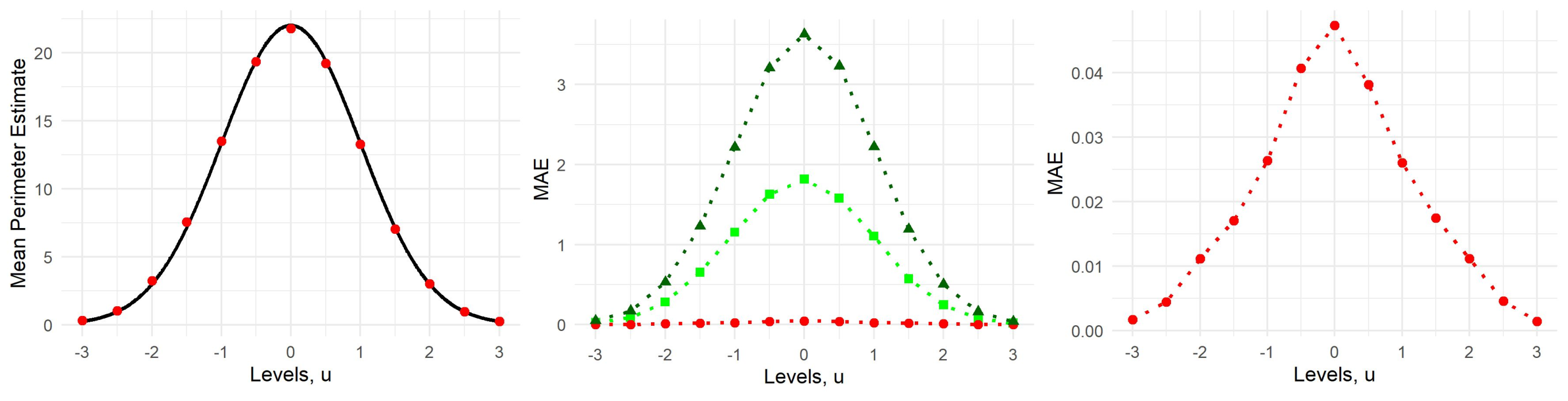}
    \put(-344,-11){(a)}
    \put(-209,-11){(b)}
    \put(-68,-11){(c)}
    \caption{The same experiment as depicted in Figure~\ref{fig:levels_iso}, but using the stationary, anisotropic, Gaussian random field $X(\cdot;2,0.5,0)$ in Example~\ref{exa:general_X}. Panel (a): $\E[P_X^T(u)]$, shown in black, is calculated via equation~\eqref{eqn:ellipse} and the Gaussian Kinematic Formula in \citet[Theorem~15.9.5]{adler2007}.  Panel (b): the MAE of the approximation of $\tilde{P}_{X}(\epsilon;T,u)$ by $\hat P^{(2)}_X(\epsilon,m_X^T;T,u)$ (red circles), $(\pi / 4)\hat P^{(1)}_X(\epsilon,;T,u)$ (green squares), and $\hat P^{(1)}_X(\epsilon,;T,u)$ (dark green triangles).  Panel (c): the MAE associated to $\hat P^{(2)}_X(\epsilon,m_X^T;T,u)$ shown again on a more appropriate y-axis scale.}
    \label{fig:levels_aniso}
\end{figure}

\section{Proofs}\label{sec:proofs}

This section provides detailed justifications for the theoretical results stated thus far. The following definition is used throughout this section.

\begin{defi}
For $s \in \R^2$, define the set $B_{s}^{(l)} := [0,l)^2 + s$, where $``+"$ in this context denotes the Minkowski sum. Let $\epsilon > 0$ and $m\in\N^+$. Define
$$\mathcal{V}_X^T(\epsilon,m; u) := \{s_{i,j}\in\mathcal{G}^{(T,\epsilon)}: i,j\in I^{(T,\epsilon,m)},\ B_{s_{i,j}}^{(m\epsilon)}\cap E^\partial_X(T,u)\neq \emptyset\}.$$
\end{defi}

The following lemma allows us to bound $\#\big(\mathcal{V}_X^T(\epsilon,m; u)\big)$, which amounts to an upper bound on the number of nonzero terms in the sum given by equation~\eqref{eqn:phat}. See Figure~\ref{fig:intersecting_boxes} in the appendix for an illustration that complements Lemma~\ref{lem:intersecting_boxes_bound}.

\begin{lemm}\label{lem:intersecting_boxes_bound}
Let $X$ be a random field satisfying Assumption~\ref{ass:basic}. For any $\epsilon>0$ and $m\in\N^+$,
$$\#\big(\mathcal{V}_X^T(\epsilon, m; u)\big) \leq 4\Big(\frac{P_X^T(u)}{m\epsilon} + \#\big(\Gamma_X^T(u)\big)\Big),\as$$
\end{lemm}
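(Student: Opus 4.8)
The plan is to bound the number of boxes $B_{s_{i,j}}^{(m\epsilon)}$, with $i,j \in I^{(T,\epsilon,m)}$, that meet the boundary curve $E^\partial_X(T,u)$, by controlling how much of the curve each such box must contain. The boxes $\{B_{s_{i,j}}^{(m\epsilon)} : i,j \in I^{(T,\epsilon,m)}\}$ are pairwise disjoint (they are half-open squares of side $m\epsilon$ whose corners lie on the sublattice indexed by $I^{(T,\epsilon,m)}$), and their union covers $T$ up to a set of zero $\mathcal{H}^1$-measure relevant to the curve — more precisely, since the vertices of $T$ are handled by the setup and the curve has finite length, we may assume (after the standard almost-sure reductions afforded by Assumptions~\ref{ass:basic} and~\ref{ass:weak}) that the curve meets no box boundary in a set of positive length. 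Thus $\sum_{s_{i,j}\in\mathcal{V}_X^T(\epsilon,m;u)} \mathcal{H}^1\big(B_{s_{i,j}}^{(m\epsilon)} \cap E^\partial_X(T,u)\big) \le P_X^T(u)$.

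The heart of the argument is a lower bound on $\mathcal{H}^1\big(B_{s_{i,j}}^{(m\epsilon)} \cap E^\partial_X(T,u)\big)$ for each box in $\mathcal{V}_X^T(\epsilon,m;u)$, i.e., each box actually meeting the curve. First I would split $\mathcal{V}_X^T(\epsilon,m;u)$ into two classes: the boxes that contain an endpoint of some maximally connected component $\gamma \in \Gamma_X^T(u)$ (equivalently, a point where $\gamma$ meets $\partial(T)$), and the boxes that contain no such endpoint. For the second class, every component of the curve entering such a box must also leave it, so the portion of the curve inside the box connects two points on $\partial\big(B_{s_{i,j}}^{(m\epsilon)}\big)$; since the curve is connected within the box and must reach the boundary of a square of side $m\epsilon$ from a point in its interior, or more carefully, since it enters and exits, the trace inside has $\mathcal{H}^1$-length at least $m\epsilon/2$ (the minimal distance argument: a connected set meeting a box and not terminating inside it spans at least half the side length — one can argue via projection onto one coordinate axis, the projection having length $\ge$ something, but the clean bound $m\epsilon/2$ comes from noting the curve reaches the boundary so its diameter is at least the distance from an interior point to the boundary, which can be as small as $0$... ). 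Here care is needed: the robust statement is that a component of $E^\partial_X(T,u)$ that intersects the box but whose endpoints (points on $\partial(T)$) are outside the box must cross the box, contributing length $\ge m\epsilon$ along at least one axis direction is too strong; instead I would use that such a curve-piece, being connected and having both its "ends" (within this box) on $\partial(B_{s_{i,j}}^{(m\epsilon)})$, joins two boundary points of the square, and if these lie on the same side the length is at least... — the cleanest route is: the number of boxes meeting the curve but containing no component-endpoint is at most (length of curve)$/(m\epsilon/2)$-ish, but to land exactly on the constant $4$ I would instead count box-crossings: each maximally connected component $\gamma$ of length $\ell(\gamma)$ crosses the sublattice grid lines (the vertical and horizontal lines spaced $m\epsilon$ apart) at most $2\ell(\gamma)/(m\epsilon) + (\text{boundary corrections})$ times, and the number of boxes it meets is at most one more than its number of grid-line crossings in each direction.

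Concretely, the step I expect to carry the weight is the following combinatorial-geometric estimate: for a fixed component $\gamma\in\Gamma_X^T(u)$ with $\mathcal{H}^1(\gamma) = \ell$, the number of sublattice boxes it intersects is at most $\tfrac{2\ell}{m\epsilon} + c$ for a small absolute constant $c$, because traversing from one box to an adjacent one requires crossing a grid line spaced $m\epsilon$ apart, so moving through $k$ boxes forces total variation $\ge (k - c)\cdot m\epsilon$ in one of the two coordinates, hence $\ell \ge (k-c)m\epsilon$; summing over the $\#\big(\Gamma_X^T(u)\big)$ components and using $\sum_\gamma \ell(\gamma) = P_X^T(u)$ gives $\#\big(\mathcal{V}_X^T(\epsilon,m;u)\big) \le \tfrac{2 P_X^T(u)}{m\epsilon} + c\,\#\big(\Gamma_X^T(u)\big)$. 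The constant $4$ in the statement presumably absorbs the worst-case bookkeeping (a component can, in the first box it enters and the last, and via the two coordinate directions, generate a bounded excess; the factor accounts for both axes and both "box-count vs crossing-count" $+1$'s). I would finalize by verifying that the almost-sure finiteness $\#\big(\Gamma_X^T(u)\big) < \infty$ (which follows from the suitably-regular property guaranteed by Assumptions~\ref{ass:basic}–\ref{ass:weak}, via \citet[Lemma~6.2.3]{adler2007}) makes the right-hand side almost surely finite, so the inequality is non-vacuous. The main obstacle is getting the geometric crossing estimate tight enough — or generous enough — to yield exactly the clean constant $4$ rather than some larger absolute constant; I would be prepared to prove the inequality with a possibly larger constant first and then tighten the accounting of endpoint-boxes versus crossing-boxes.
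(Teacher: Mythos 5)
Your overall strategy --- bound the number of boxes met by each maximally connected component of $E^\partial_X(T,u)$ in terms of its length, then sum over components --- is the same as the paper's, but the quantitative step you lean on is not established and, as written, is incorrect. You claim that if a component of length $\ell$ meets $k$ boxes then its total variation in \emph{one} of the two coordinates is at least $(k-c)m\epsilon$, hence $\ell\ge(k-c)m\epsilon$; a curve running diagonally at $45^\circ$ through $k$ boxes has length about $km\epsilon/\sqrt{2}$ and total variation about $km\epsilon/2$ in \emph{each} coordinate, so this claim fails for large $k$. (It is also internally inconsistent: $\ell\ge(k-c)m\epsilon$ would give $k\le\ell/(m\epsilon)+c$, not the $2\ell/(m\epsilon)+c$ you then state.) Moreover you never pin down the additive constant $c$ and explicitly say you would settle for a larger constant, so the bound with the constant $4$ is not reached. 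Your first attempt (a uniform lower bound on $\mathcal{H}^1\big(B_{s_{i,j}}^{(m\epsilon)}\cap E^\partial_X(T,u)\big)$ per box) is rightly abandoned --- a component can clip a box with arbitrarily small length --- but that discussion should be cut rather than left as a dead end.

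The paper closes the gap with a cleaner subdivision. Each $\gamma\in\Gamma_X^T(u)$ is written as a union of $M_\gamma=\lfloor\mathcal{H}^1(\gamma)/(m\epsilon)\rfloor+1$ connected arcs, each of length at most $m\epsilon$. Such an arc has diameter at most $m\epsilon$, so each of its two axis projections is an interval of length at most $m\epsilon$ and therefore meets at most two of the half-open intervals of length $m\epsilon$ partitioning that axis; hence the arc meets at most two columns and two rows of the disjoint family $\{B_{s_{i,j}}^{(m\epsilon)}:i,j\in I^{(T,\epsilon,m)}\}$, i.e., at most $4$ boxes. Summing gives $\#\big(\mathcal{V}_X^T(\epsilon,m;u)\big)\le 4\sum_{\gamma}M_\gamma\le 4\big(P_X^T(u)/(m\epsilon)+\#(\Gamma_X^T(u))\big)$. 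This is exactly the generous bookkeeping you were looking for: the factor $4$ is $2\times 2$ (columns times rows per short arc), not the outcome of a grid-line crossing count. To salvage your crossing argument you would need to prove $k\le 2\ell/(m\epsilon)+c$ with an explicit $c\le 4$, which is more delicate than the subdivision argument and buys nothing here.
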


\begin{proof}
The squares of side length $m\epsilon$ in the set $\mathcal{B} := \{B_{s_{i,j}}^{(m\epsilon)}:i,j\in I^{(T,\epsilon,m)}\}$ are disjoint and cover $T$.
For each $\gamma\in\Gamma_X^T(u)$, it is possible to find connected subsets of $\gamma$, namely $\beta_{\gamma,1}$, \ldots, $\beta_{\gamma,M_\gamma}$, that satisfy
$$\gamma = \bigcup_{i=1}^{M_\gamma}\beta_{\gamma,i},$$
where
$$M_\gamma := \Big\lfloor\frac{\mathcal{H}^1(\gamma)}{m\epsilon}\Big\rfloor + 1,$$
and for all $i\in\{1,\ldots,M_{\gamma}\}$,
$$\mathcal{H}^1(\beta_{\gamma, i}) \leq m\epsilon.$$
Each $\beta_{\gamma,i}$ can intersect at most 4 elements of $\mathcal{B}$. Since
$$ E^\partial_X(T,u) = \bigcup_{\gamma\in\Gamma_X^T(u)}\ \bigcup_{i=1}^{M_\gamma}\beta_{\gamma,i},$$
it follows that
\begin{align*}
    \#\big(\mathcal{V}_X^T(\epsilon, m; u)\big) &= \#\big(\{b\in\mathcal{B}:b \cap E^\partial_X(T,u) \neq \emptyset\}\big)\\
    &\leq 4\sum_{\gamma\in\Gamma_X^T(u)}M_\gamma \leq 4\Big(\frac{P_X^T(u)}{m\epsilon} + \#\big(\Gamma_X^T(u)\big)\Big),\as
\end{align*}
\end{proof}

\begin{proof}[Proof of Theorem~\ref{thm:consistent_Phat}]
Let $\omega\in\Omega$ be such that $\Lambda_{X(\omega)}^T(u)$, defined in Definition~\ref{def:resolution}, is positive (note that almost any $\omega\in\Omega$ will suffice, as discussed in Remark~\ref{rem:positive_as}). There exists $n_0\in\N^+$ such that $E_{X(\omega)}(u)$ is resolved by $m_n\epsilon_n$ in $T$ for all $n\geq n_0$ (see Definition~\ref{def:resolution}). Fix $s_{i,j}\in \mathcal{V}_{X(\omega)}^T(\epsilon_n,m_n;u)$ and $n\geq n_0$.
Let $\gamma := B_{s_{i,j}}^{(m_n\epsilon_n)}\cap E^\partial_{X(\omega)}(T,u)$.
It follows from our construction that $\mathcal{Y}_{X(\omega)}^T(u)\cap \gamma$ contains at most one element, since the spacing between points in $\mathcal{Y}_{X(\omega)}^T(u)$ is larger than the diameter of $B_{s_{i,j}}^{(m_n\epsilon_n)}$.
It also follows from our construction that $\gamma$ is either connected, or the union of two maximally connected subsets. To see this, note that the planar curvature of $\gamma$ does not exceed $1/(m_{n_0}\epsilon_{n_0})$ since $m_{n_0}\epsilon_{n_0}$ is smaller than the reach of both $E_{X(\omega)}(T,u)$ and $T\setminus E_{X(\omega)}(u)$.
Therefore, the curve is bounded by the planar arcs of radius $m_{n_0}\epsilon_{n_0}$ as shown in Figure~\ref{fig:gammas} \citep{dubins1961}.
We aim to bound the absolute difference between the length of $\gamma$ and its contribution to $\hat{P}^{(2)}_{X(\omega)}(\epsilon_n, m_n; T, u)$. To this end, the two cases shown in Figure~\ref{fig:gammas} are considered separately.

\begin{figure}
    \centering
    \includegraphics[width=0.74\linewidth]{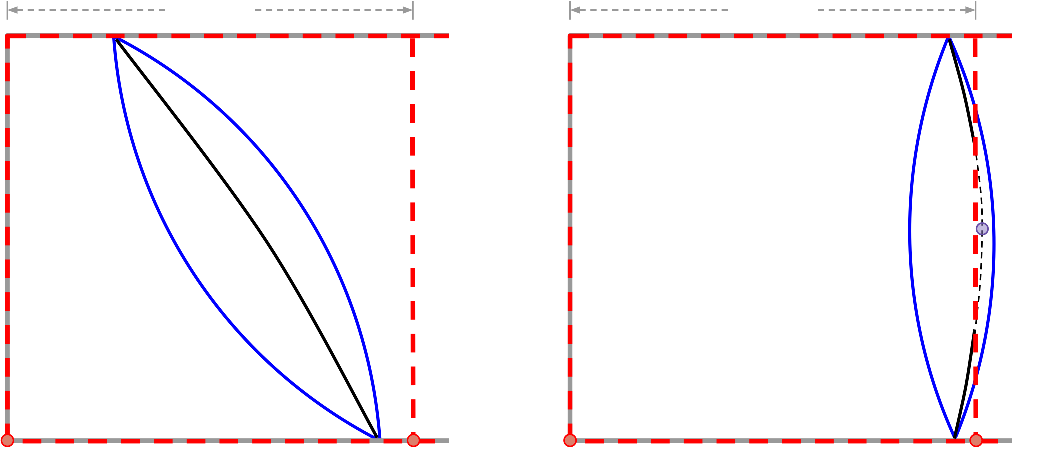}
    \put(-251,-8){(Case 1)}
    \put(-294,10){$s_{i,j}$}
    \put(-179,10){$s_{i+1,j}$}
    \put(-251,129){$m_n\epsilon_n$}
    \put(-90,-8){(Case 2)}
    \put(-133,10){$s_{i,j}$}
    \put(-18,10){$s_{i+1,j}$}
    \put(-90,129){$m_n\epsilon_n$}
    \caption{(Case 1) The curve $\gamma$ shown in black is bounded by the planar arcs of radius $m_{n_0}\epsilon_{n_0}$ shown in blue. (Case 2) Here, $\gamma$ shown in black is not connected, and the only point in $\mathcal{Y}_{X(\omega)}^T(u)\cap B_{s_{i+1,j}}^{(m_n\epsilon_n)}$ is highlighted in purple.}
    \label{fig:gammas}
\end{figure}

\begin{description}
\item[Case 1:] \emph{The curve $\gamma$ is connected (see the left panel of Figure~\ref{fig:gammas})}.
The closure of $\gamma$ can be parametrized by a continuous injective vector function $\textbf{x} : [0,1] \rightarrow \R^2$. For $\alpha \in [0,1]$, define
\begin{equation}\label{eqn:TV}
\mathrm{TV}_k(\alpha;{s_{i,j}}) := \int_0^\alpha|x_k'(s)|\ \d s,\qquad k \in\{1,2\},
\end{equation}
so that $\mathrm{TV}_k(1;{s_{i,j}})$ corresponds to the total variation of $\gamma$ in the $k^{\mathrm{th}}$ principle Cartesian direction of $\R^2$.
As a consequence of the coarea formula \cite[Equation~(7.4.15)]{adler2007}, the quantity $\epsilon_n N_{X(\omega),h}(i,j;u)$ (see Definition~\ref{def:estimator}) is a Riemann sum that approximates the definite integral $\mathrm{TV}_1(1; {s_{i,j}})$. The total error can therefore be bounded above by
\begin{equation}\label{eqn:coarea_approx}
    \big|\epsilon_n N_{X(\omega),h}(i,j;u) - \mathrm{TV}_1(1;s_{i,j})\big| \leq 4\epsilon_n,
\end{equation}
as suggested by Figure~\ref{fig:coarea}, found in the appendix.
Analogously,
\begin{equation*}
    \big|\epsilon_n N_{X(\omega),v}(i,j;u) - \mathrm{TV}_2(1;s_{i,j})\big| \leq 4\epsilon_n.
\end{equation*}
Let
\begin{equation}\label{eqn:lnhat}
    \hat{l}_n(s_{i,j}) := \epsilon_n\big|\big|\big(N_{X(\omega),v}(i,j;u), N_{X(\omega),h}(i,j;u)\big)\big|\big|_2,
\end{equation}
and we achieve the following bound by the triangle inequality
\begin{equation}\label{eqn:lnhat_closeto_TV}
    \Big|\hat{l}_n(s_{i,j}) - \big|\big|\big(\mathrm{TV}_1(1;s_{i,j}),\mathrm{TV}_2(1;s_{i,j})\big)\big|\big|_2\Big| \leq 4\sqrt{2}\epsilon_n.
\end{equation}
It is clear that
\begin{equation}\label{eqn:line_seg_leq_TV}
    ||\textbf{x}(1)-\textbf{x}(0)||_2 \leq \big|\big|\big(\mathrm{TV}_1(1;s_{i,j}),\mathrm{TV}_2(1;s_{i,j})\big)\big|\big|_2,
\end{equation}
since the computation of the left-hand side of Equation ~\eqref{eqn:line_seg_leq_TV} involves the same integral as in~\eqref{eqn:TV} but without the absolute values. In addition, let
$$l(\alpha;s_{i,j}) := \int_0^\alpha||\textbf{x}'(s)||_2\ \d s$$
denote the length of $\textbf{x}(s)$ for $s\in[0,\alpha]$.
It follows from the definition of the derivative and the reverse triangle inequality that for all $\alpha\in(0,1)$,
\begin{equation*}
    \bigg|\frac{\partial}{\partial \alpha} \big|\big|\big(\mathrm{TV}_1(\alpha;s_{i,j}),\mathrm{TV}_2(\alpha;s_{i,j})\big)\big|\big|_2\bigg|
    \leq \big|\big|\big(x_1'(\alpha), x_2'(\alpha)\big)\big|\big|_2= \frac{\partial}{\partial \alpha} l(\alpha;s_{i,j}).
\end{equation*}
Therefore,
\begin{equation}\label{eqn:TV_leq_length}
    \big|\big|\big(\mathrm{TV}_1(1;s_{i,j}),\mathrm{TV}_2(1;s_{i,j})\big)\big|\big|_2 \leq l(1;s_{i,j}).
\end{equation}
Since the curvature of $\gamma$ is bounded above by the inverse of $\Lambda^T_{X(\omega)}(u)$, we apply a well known result from Schwartz \citep{dubins1961} that guarantees that \begin{equation}\label{eqn:length_leq_a}
    l(1;s_{i,j}) \leq a(s_{i,j}),
\end{equation}
where $a(s_{i,j})$ is the length of the smallest planar arc with radius $m_{n_0}\epsilon_{n_0}$ that has endpoints $\textbf{x}(0)$ and $\textbf{x}(1)$.
The Taylor expansion of the sine function shows the existence of $K\in\R^+$ independent of $s_{i,j}$ and $n$ such that
\begin{equation}\label{eqn:Taylor_sine}
    \Big|a(s_{i,j})-||\textbf{x}(1) - \textbf{x}(0)||_2\Big| \leq K||\textbf{x}(1) - \textbf{x}(0)||_2^3 \leq K(\sqrt{2}m_n\epsilon_n)^3.
\end{equation}
Assembling the bounds demonstrated in Equations~\eqref{eqn:line_seg_leq_TV},~\eqref{eqn:TV_leq_length}, and~\eqref{eqn:length_leq_a}, we get
$$||\textbf{x}(1)-\textbf{x}(0)||_2 \leq \big|\big|\big(\mathrm{TV}_1(1;s_{i,j}),\mathrm{TV}_2(1;s_{i,j})\big)\big|\big|_2 \leq
l(1;s_{i,j})\leq
a(s_{i,j}),$$
which in combination with~\eqref{eqn:Taylor_sine} implies
\begin{equation}\label{eqn:length_closeto_TV}
    \Big|l(1;s_{i,j}) - \big|\big|\big(\mathrm{TV}_1(1;s_{i,j}),\mathrm{TV}_2(1;s_{i,j})\big)\big|\big|_2\Big| \leq K(\sqrt{2}m_n\epsilon_n)^3.
\end{equation}
Now, combining Equations~\eqref{eqn:length_closeto_TV} and~\eqref{eqn:lnhat_closeto_TV} by the triangle inequality yields
\begin{equation}\label{eqn:case1_final}
    \Big|\hat{l}_n(s_{i,j}) - l(1;s_{i,j})\Big| \leq K(\sqrt{2}m_n\epsilon_n)^3 + 4\sqrt{2}\epsilon_n.
\end{equation}

\item[Case 2:] \emph{The curve $\gamma$ has two connected components (see the right panel of Figure~\ref{fig:gammas})}. Similarly to Case 1, we parametrize the closure of each maximally connected subset of $\gamma$ with continuous injective vector functions $\textbf{x} : [0,1] \rightarrow \R^2$ and $\textbf{y} : [0,1] \rightarrow \R^2$. For $\alpha\in [0,1]$, define
\begin{equation*}
\mathrm{TV}_k(\alpha;{s_{i,j}}) := \int_0^\alpha\big(|x_k'(s)|+|y_k'(s)|\big) \ \d s,\qquad k \in\{1,2\}.
\end{equation*}
With $\hat{l}_n(s_{i,j})$ defined as in~\eqref{eqn:lnhat}, equation~\eqref{eqn:lnhat_closeto_TV} holds.
Now, consider the curve $\tilde\gamma := (B_{s_{i,j}}^{(m_n\epsilon_n)} \cup B_{s_{i+1,j}}^{(m_n\epsilon_n)})\cap E^\partial_{X(\omega)}(T,u)$, which is $\gamma$ in union with the middle section in the adjacent box $B_{s_{i+1,j}}^{(m_n\epsilon_n)}$ (where we have assumed, without loss of generality, that the ``middle section'' is in the box to the right). It is clear that $\tilde\gamma$ is connected, so its closure can be parametrized by the continuous injective vector function $\textbf{z} : [0,1] \rightarrow \R^2$. Define
\begin{equation*}
\widetilde{\mathrm{TV}_k}(\alpha;{s_{i,j}}) := \int_0^\alpha|z_k'(s)| \ \d s,\qquad k \in\{1,2\}
\end{equation*}
and
$$\tilde{l}(\alpha;s_{i,j}) := \int_0^\alpha||\textbf{z}'(s)||_2\ \d s.$$
By the same arguments that led to equation~\eqref{eqn:length_closeto_TV}, it holds that
\begin{equation}\label{eqn:tildes_close}
    0 \leq \tilde{l}(1;s_{i,j}) - \big|\big|\big(\widetilde{\mathrm{TV}_1}(1;s_{i,j}),\widetilde{\mathrm{TV}_2}(1;s_{i,j})\big)\big|\big|_2 \leq K(\sqrt{2}m_n\epsilon_n)^3,
\end{equation}
where $K\in\R^+$ is independent of $s_{i,j}$ and $n$.
Let
$${l}(1;s_{i,j}) := \int_0^1\big(||\textbf{x}'(s)||_2+||\textbf{y}'(s)||_2\big)\ \d s$$
be the total length of $\gamma$. Then
$\tilde{l}(1;s_{i,j}) = l(1;s_{i,j}) + l(1;s_{i+1,j}),$
and
\begin{align*}
    \big|\big|\big(\widetilde{\mathrm{TV}_1}(1;s_{i,j}),\widetilde{\mathrm{TV}_2}(1;s_{i,j})\big)\big|\big|_2 \leq &\
    \big|\big|\big(\mathrm{TV}_1(1;s_{i,j}),\mathrm{TV}_2(1;s_{i,j})\big)\big|\big|_2\\
    &+ \big|\big|\big(\mathrm{TV}_1(1;s_{i+1,j}),\mathrm{TV}_2(1;s_{i+1,j})\big)\big|\big|_2
\end{align*}
by the triangle inequality. Therefore,~\eqref{eqn:tildes_close} can be written as
\begin{align}
     \Big(l(1;s_{i,j}) &- \big|\big|\big(\mathrm{TV}_1(1;s_{i,j}),\mathrm{TV}_2(1;s_{i,j})\big)\big|\big|_2\Big)
     +\nonumber\\
     \Big(l(1;s_{i+1,j}) &-
     \big|\big|\big(\mathrm{TV}_1(1;s_{i+1,j}),\mathrm{TV}_2(1;s_{i+1,j})\big)\big|\big|_2\Big) \leq K(\sqrt{2}m_n\epsilon_n)^3.\label{eqn:case2_final}
\end{align}
By the arguments in Case 1 that led to equation~\eqref{eqn:TV_leq_length}, it follows that $$l(1;s_{i+1,j}) \geq \big|\big|\big(\mathrm{TV}_1(1;s_{i+1,j}),\mathrm{TV}_2(1;s_{i+1,j})\big)\big|\big|_2,$$ and by the same arguments, $$l(1;s_{i,j}) \geq \big|\big|\big(\mathrm{TV}_1(1;s_{i,j}),\mathrm{TV}_2(1;s_{i,j})\big)\big|\big|_2.$$ Therefore, both~\eqref{eqn:length_closeto_TV} and~\eqref{eqn:case1_final} follow from equation~\eqref{eqn:case2_final}.
\end{description}
Following from equation~\eqref{eqn:case1_final}, we have
\begin{align*}
    \big|\hat{P}^{(2)}_{X(\omega)}(\epsilon_n, m_n; T, u) - {P}_{X(\omega)}^T(u)\big|
    &= \bigg|\sum_{s_{i,j}\in \mathcal{V}_{X(\omega)}^T(\epsilon_n, m_n; u)}
    \Big(\hat{l}_n(s_{i,j}) - l(1;s_{i,j})\Big) \bigg|\\
    &\leq \sum_{s_{i,j}\in \mathcal{V}_{X(\omega)}^T(\epsilon_n, m_n; u)}\big|\hat{l}_n(s_{i,j}) - l(1;s_{i,j})\big|\\
    &\leq \#\big(\mathcal{V}_{X(\omega)}^T(\epsilon_n, m_n; u)\big)\ 2\sqrt{2}(Km_n^3\epsilon_n^3 + 2\epsilon_n).
\end{align*}

By Lemma~\ref{lem:intersecting_boxes_bound},
\begin{align}
    g_n\big|\hat{P}^{(2)}_{X(\omega)}&(\epsilon_n, m_n; T, u)  - {P}_{X(\omega)}^T(u)\big|\nonumber\\
    &\leq 8\sqrt{2}g_n
    \bigg(\frac{P_{X(\omega)}^T(u)}{m_n\epsilon_n} + \#\big(\Gamma_{X(\omega)}^T(u)\big)\bigg)\Big(Km_n^3\epsilon_n^3 + 2\epsilon_n\Big)\nonumber\\
    & = 8\sqrt{2}\ \frac{g_n}{m_n}
    \Big(P_{X(\omega)}^T(u) + m_n\epsilon_n\#\big(\Gamma_{X(\omega)}^T(u)\big)\Big)\Big(Km_n^3\epsilon_n^2 + 2\Big),\label{eqn:penultimate_thm1}
\end{align}
which tends to 0 as $n\rightarrow\infty$. This convergence holds for almost every $\omega\in\Omega$, since $\Lambda_{X}^T(u)$ is almost surely positive.
\end{proof}

\begin{proof}[Proof of Corollary~\ref{cor:as_convergence}]
The last expression in equation~\eqref{eqn:penultimate_thm1} tends to 0 under the relaxed constraint on $(\epsilon_n)_{n\geq 1}$ if $g_n\equiv 1$ for all $n\in\N^+$.
\end{proof}

\begin{proof}[Proof of Proposition~\ref{prp:L1}]
If a sequence is uniformly integrable, convergence in $L^1(\Omega)$ is equivalent to convergence in probability. Therefore, by Corollary~\ref{cor:as_convergence}, it suffices to show that $\big(\hat{P}^{(2)}_X(\epsilon_n, m_n; T, u)\big)_{n\geq 1}$ is bounded above by an element of $L^1(\Omega)$ uniformly in $n$. Note that for each $n\geq 1$,
$$\hat{P}^{(2)}_X(\epsilon_n, m_n; T, u) \leq \hat{P}^{(1)}_X(\epsilon_n; T, u),\as$$
since the 2-norm is inferior to the 1-norm.
Now, consider the quantity
$$G_n := \#\big(\{s\in\mathcal{G}^{(T,\epsilon_n)}:B_{s}^{(\epsilon_n)}\cap E^\partial_{X}(T,u)\neq \emptyset\}\big),$$
which represents the number of pixels of side length $\epsilon_n$ that the curve $E^\partial_{X}(T,u)$ intersects.
Almost surely, $\hat{P}^{(1)}_X(\epsilon_n; T, u)$ is at most $4\epsilon_n$ (the perimeter of one pixel) times $G_n$. By the same arguments used to prove Lemma~\ref{lem:intersecting_boxes_bound}, we have for all $n\geq 1$,
$$G_n \leq 4\Big(\frac{P_X^T(u)}{\epsilon_n}+\#\big(\Gamma_X^T(u)\big)\Big),\as$$
and
$$\hat{P}^{(2)}_X(\epsilon_n, m_n; T, u) \leq \hat{P}^{(1)}_X(\epsilon_n; T, u) \leq 4\epsilon_nG_n \leq 16\Big(P_X^T(u) + \sup_n(\epsilon_n)\#\big(\Gamma_X^T(u)\big)\Big),\as$$
which is in $L^1(\Omega)$ by Assumption~\ref{ass:integrable_quantities}.
\end{proof}

\begin{proof}[Proof of Proposition~\ref{prp:Tn}]
Let
$$W_n:= \frac{\hat{P}^{(2)}_{X}(\epsilon_{n}, m_{n}; T_n, u) - {P}_{X}^{T_n}(u)}{\sqrt{\nu(T_n)}}.$$
Given that $E_X(u)$ is resolved by $m_n\epsilon_n$ in $T_n$ for fixed $n\in\N^+$, equation~\eqref{eqn:penultimate_thm1} holds with $g_n=1/\sqrt{\nu(T_n)}$, implying
\begin{align}\label{eqn:W_n_bound}
|W_n| &\leq \frac{8}{m_n}\sqrt{\frac{2}{\nu(T_n)}}\Big(P_{X}^{T_n}(u) + m_n\epsilon_n\#\big(\Gamma_{X}^{T_n}(u)\big)\Big)\Big(Km_n^3\epsilon_n^2 + 2\Big)\nonumber\\
&= \frac{8\sqrt{2\nu(T_n)}}{m_n}\bigg(\frac{P_{X}^{T_n}(u)}{\nu(T_n)} + m_n\epsilon_n\frac{\#\big(\Gamma_{X}^{T_n}(u)\big)}{\nu(T_n)}\bigg)\Big(Km_n^3\epsilon_n^2 + 2\Big),
\end{align}
where $K\in\R^+$ is independent of $n$. Note that for any $n\in\N^+$,
$$T_n = \bigcup_{i=1}^{n^2}T_n^{(i)},$$
for a family of sets $(T_n^{(i)})_{i=1,\ldots,n^2}$, each of which being congruent to $T_1$. Then
$$\E\bigg[\frac{P_X^{T_n}(u)}{\nu(T_n)}\bigg] = \E\bigg[\frac{\sum_{i=1}^{n^2}P_X^{T_n^{(i)}}(u)}{n^2\nu(T_1)}\bigg] = \E\bigg[\frac{P_X^{T_1}(u)}{\nu(T_1)}\bigg] < \infty $$
and
$$\E\bigg
[\frac{\#\big(\Gamma_X^{T_n}(u)\big)}{\nu(T_n)}\bigg] \leq
\E\bigg[\frac{\sum_{i=1}^{n^2}\#\big(\Gamma_X^{T_n^{(i)}}(u)\big)}{n^2\nu(T_1)}\bigg] = \E\bigg[\frac{\#\big(\Gamma_X^{T_1}(u)\big)}{\nu(T_1)}\bigg] < \infty,$$
by Assumption~\ref{ass:integrable_quantities}.
This implies that both
\begin{equation*}
\limsup_{n\rightarrow\infty}\frac{P_X^{T_n}(u)}{\nu(T_n)}\qquad \mathrm{and}\qquad \limsup_{n\rightarrow\infty}\frac{\#\big(\Gamma_X^{T_n}(u)\big)}{\nu(T_n)}
\end{equation*}
are finite almost surely.
Therefore, the final expression in~\eqref{eqn:W_n_bound} tends to 0 almost surely, since $\sqrt{\nu(T_n)}/m_n\rightarrow 0$ by assumption. Now, denote the random event $A_n:=\{m_n\epsilon_n < \Lambda_X^{T_n}(u)\}$, and let $A_n^C$ denote its complement. Since $\P(A_n)\rightarrow 1$ as $n\rightarrow\infty$ by assumption, it holds that for any $\eta > 0$,
$$\P(|W_n| > \eta) \leq \P(|W_n| > \eta\ |\ A_n)\P(A_n) + \P(A_n^C) \rightarrow 0$$
as $n\rightarrow\infty$.
\end{proof}

\begin{proof}[Proof of Theorem~\ref{thm:CLT}]
The Central Limit Theorem in \cite{iribarren1989} for $P_X^{T_n}(u)$ at the fixed level $u\in\R$ is implied by the constraints on $X$. The result is proven for a single level $u$, but as noted in the Discussion of \cite{kratz2018} and in \cite{shashkin2013}, the Cram{\'e}r-Wald device can be used to extend the arguments to the multivariate setting.

The Central Limit Theorem for the perimeter is then written as follows. For any $\textbf{u}\in\R^k$ satisfying the given constraints, it holds that
\begin{equation}\label{eqn:kratz_CLT}
\frac{P_X^{T_n}(\textbf{u}) - \E[P_X^{T_n}(\textbf{u})]}{\sqrt{\nu(T_n)}} \stackrel{\d}{\longrightarrow} \mathcal{N}_k\big(\textbf{0},\Sigma(\textbf{u})\big),\qquad n\rightarrow\infty.
\end{equation}
Equation~\eqref{eqn:CLT_statement} is obtained by combining equation~\eqref{eqn:kratz_CLT}, Proposition~\ref{prp:Tn}, and Slutsky's theorem.

By writing $P_X^T(u) = \lim_{\epsilon\to 0}1/(2\epsilon)\int_T \I{\vert X(s)-u\vert < \epsilon}||\nabla X(s)||_2\, \d s$, (see, for instance, Proposition~6.13 in \citet{azais2009}), it is easily checked that for $u_1,u_2\in\R$,
$$\E[P_X^T(u_1)] = \nu(T) f(u_1)\E\big[||\nabla X(s)||_2\ \big|\ X(s) = u_1\big]$$
and
\begin{align*}
\E[P_X^T(u_1)P_X^T(u_2)] =& \int_{T}\int_{T} g_{s_2-s_1}(u_1,u_2)\\
&\times \E\big[||\nabla X(s_1)||_2||\nabla X(s_2)||_2\ \big|\ X(s_1) = u_1, X(s_2) = u_2\big]\d s_1\d s_2,
\end{align*}
where $f$ denotes the marginal density function of $X$, and $g_s$, the joint density function of $\big(X(0), X(s)\big)$. Hence the result in \eqref{eqn:var_Sigma}.
\end{proof}

\begin{proof}[Proof of Corollary~\ref{cor:gaussian}]
Under the given constraints, it is clear that Assumption~\ref{ass:basic} is satisfied. Also following from the hypotheses, the gradient of $X$ and the Hessian matrix of $X$ are independent with Gaussian entries, and thus the conditions of Theorem~11.3.3 of \cite{adler2007} are satisfied.
Therefore, $X$ is almost surely \textit{suitably regular} \cite[Definition~6.2.1]{adler2007} over bounded rectangles, which implies the conditions of Assumption~\ref{ass:weak}.
The expectations $\E\big[P_X^{T_n}(u)\big]$ and $\E\big[\#\big(\Gamma_X^{T_n}(u)\big)\big]$ are shown to be finite in \citet[Theorem~13.2.1]{adler2007} and \cite{beliaev2020} respectively, implying the conditions of Assumption~\ref{ass:integrable_quantities}. Therefore, Proposition~\ref{prp:Tn} holds, which in combination with the Central Limit Theorem in \citet[Theorem~4.7]{berzin2021} yields the result.
\end{proof}

\bigskip
\section*{Discussion} We have shown for a large class of random fields that $\hat{P}^{(p)}_{X}(\epsilon, m; T, u)$ with $p=2$ is a consistent and asymptotically normal estimator for $P_X^T(u)$. Our numerous simulation studies showcase the various cases where it is advantageous to use the norm $p=2$ as opposed to $p=1$. An obvious example is when $X$ is not known to be isotropic. For $p > 2$, we do not expect $\hat{P}^{(p)}_{X}(\epsilon, m; T, u)$ to have desirable properties, since there is a bias introduced for certain orientations of the curve $E_X^{\partial}(T,u)$. There is a natural extension of $\hat{P}^{(p)}_{X}(\epsilon, m; T, u)$ to random fields defined on $\R^d$, with $d>2$, and it is plausible that analogous results hold in this multivariate setting. Results such as the central limit theorems in \cite{shashkin2013}, \cite{mueller2017}, and \cite{kratz2018}, which hold in arbitrary dimension, will be useful to study the Gaussian fluctuations of our estimate.

Future work might also investigate the rate at which $\Lambda_X^T(u)$ tends weakly to 0 as $T \nearrow \R^2$, which would provide a more explicit constraint on the rate at which $\epsilon_n\rightarrow 0$ in Proposition~\ref{prp:Tn}.

Furthermore, we plan to study how the proposed  perimeter estimate can be used to build a test statistics for isotropy testing based on the length of level curves of smooth random fields. This future analyse could enrich the existing literature of isotropy testing based on functionals of level curves \citep{wschebor1985, cabana1987, Fou17, berzin2021}.\smallskip

The proposed estimator works with observations available at a set of locations forming a regular grid. A large variety of datasets possess this format, such as outputs of various types of models (\emph{e.g.}, climate, hydrology), remote sensing data, or imaging data (\emph{e.g.}, in medecine). However,  geostatistical spatial data are sometimes not observed on regular grids, such as meteorological data observed over a network of weather stations not organised in any grid structures. In such cases, one could first apply a deterministic or stochastic interpolation method (\emph{e.g.}, bilinear interpolation, geostatistical kriging) to pre-process data to make them available on a  regular grid, and then use the grid-based estimator.

In this paper, we have focused on perimeter estimator properties in the case of a single replicate of the random field with one or several fixed levels $u$. Properties of estimators of Lipschitz--Killing curvatures, including the perimeter, could further be studied when the level $u$ tends towards the upper endpoint of the marginal distribution of $X$. This setting is relevant for extreme-value theory of stochastic processes \citep[][Chapters~9--10]{deHaan2006}. Jointly with decreasing pixel size and increasing domain $T$,  we would further have to control the rate at which the perimeter  tends towards zero  as $u$ increases, where ultimately the excursion set is almost surely empty. The combination of the results obtained for our perimeter estimator  with asymptotics of the exact perimeter for increasing level $u$ \citep{adler2007} could be useful to establish asymptotic results and appropriate estimators for the perimeter and for other excursion-set geometrical features  at extreme thresholds.

\bigskip

\begin{acks}[Acknowledgments]
The authors are grateful to Anne Estrade, C\'{e}line Duval and Hermine Bierm\'{e} for fruitful discussions. The authors sincerely express their gratitude to the anonymous referee and associated editor for their valuable comments and remarks which improve the quality of the present work. This work has been supported by the French government, through the 3IA C\^{o}te d'Azur Investments in the Future project managed by the National Research Agency (ANR) with the reference number ANR-19-P3IA-0002. This work has been partially supported  by the project  ANR MISTIC (ANR-19-CE40-0005).
\end{acks}

\bigskip

\bibliographystyle{imsart-nameyear.bst} 
\bibliography{biblio.bib}       

\vspace{10pt}
\textbf{Correspondence:} Ryan Cotsakis, Laboratoire J.A. Dieudonn\'e, Universit\'e C\^ote d’Azur, 28 avenue Valrose,
06108 Nice Cedex 02, France.\\
Email: \href{mailto:ryan.cotsakis@unice.fr}{ryan.cotsakis@unice.fr}

\newpage
\begin{appendix}
\section*{}
Here, we provide two figures; one to complement Lemma~\ref{lem:intersecting_boxes_bound}, and the other, equation~\eqref{eqn:coarea_approx}.

\begin{figure}[H]
    \centering
    \includegraphics[width=0.32\linewidth]{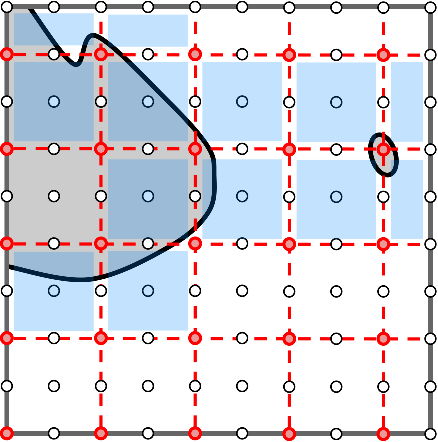}
    \caption{An illustration to aid Lemma~\ref{lem:intersecting_boxes_bound}. With $m=2$, the curve $E^\partial_X(T,u)$ shown in black intersects 13 elements of $\{B_{s_{i,j}}^{(m\epsilon)}:i,j\in I^{(T,\epsilon,m)}\}$, which are highlighted in blue. Thus, $\#\big(\mathcal{V}_X^T(\epsilon, m; u)\big) = 13$.}
    \label{fig:intersecting_boxes}
\end{figure}

\begin{figure}[H]
    \centering
    \includegraphics[width=0.42\linewidth]{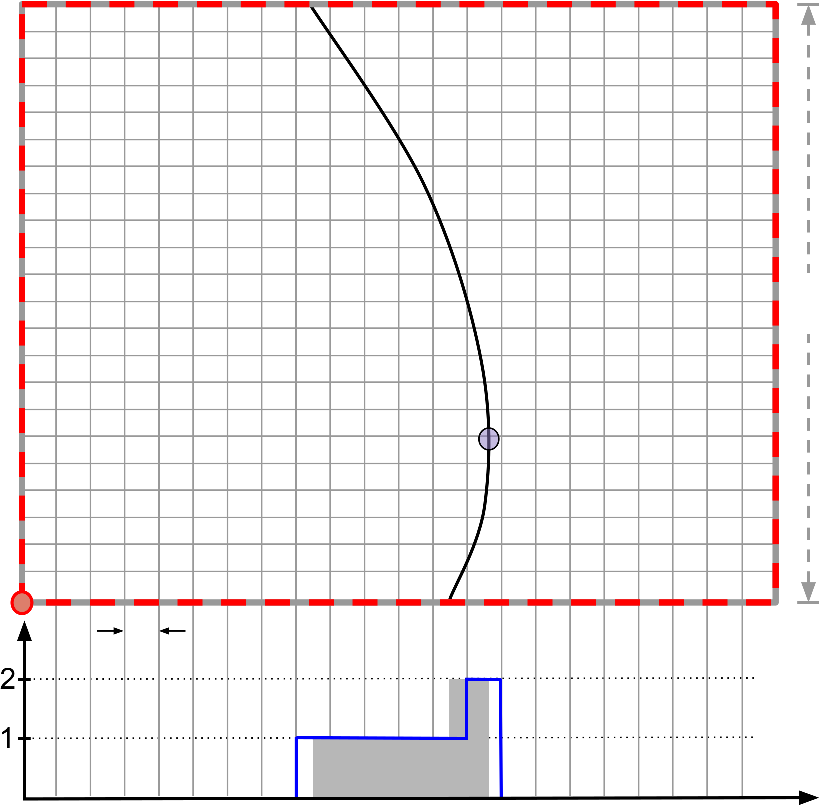}
    \put(-8,102){$m_n\epsilon_n$}
    \put(-145,30){$\epsilon_n$}
    \put(-163,48){$s_{i,j}$}
    \caption{The approximation of $TV_1(1,s_{i,j})$ in~\eqref{eqn:TV} by $\epsilon_n N_{X(\omega),h}(i,j;u)$ (see Definition~\ref{def:estimator}). The black curve $\gamma$ is shown in $B_{s_{i,j}}^{(m_n\epsilon_n)}$, which we outline in dashed red. The definite integral $TV_1(1,s_{i,j})$ is represented by the grey area, and is approximated by $\epsilon_n N_{X(\omega),h}(i,j;u) = 7\epsilon_n$, the area under the blue curve. The absolute error of this approximation is clearly bounded above by $4\epsilon_n$ as stated in equation~\eqref{eqn:coarea_approx}. Highlighted in purple is a point in $\mathcal{Y}_{X(\omega)}^T(u)$ (see equation~\eqref{eqn:curly_Y}).}
    \label{fig:coarea}
\end{figure}

\end{appendix}

\end{document}